\newtheorem{thm}{Theorem}[section]
\newtheorem{lem}[thm]{Lemma}
\newtheorem{prop}[thm]{Proposition}
\theoremstyle{definition}
\newtheorem{definition}[thm]{Definition}
\newtheorem{remark}[thm]{Remark}
\newtheorem{example}[thm]{Example}
\numberwithin{equation}{section}
\newcommand{\FF}{\mathbb{F}}
\begin{document}

\title{ Mapping resolutions of length three I. }

\author{Lorenzo Guerrieri \thanks{ Jagiellonian University, Instytut Matematyki, 30-348 Krak\'{o}w \textbf{Email address:} guelor@guelan.com
}  
\and Jerzy Weyman
\thanks{Jagiellonian University, Instytut Matematyki, 30-348 Krak\'{o}w \textbf{Email address:} jerzy.weyman@uj.edu.pl }} 
\maketitle
\begin{abstract}
\noindent We produce some interesting families of resolutions of length three by describing certain open subsets of the spectrum of the generic ring for such resolutions constructed in \cite{W18}.
\noindent MSC: 13D02, 13C05 \\
\noindent Keywords: free resolutions, generic ring
\end{abstract}

\section{Introduction}

In this paper we construct some interesting finite free resolutions of length three of formats $(1,n,n,1)$ (for $n$ even) and $(1,4,n,n-3)$. 
The construction is based on the construction of the generic ring ${\hat R}_{gen}$ given in \cite{W18}. 

Given a free resolution $\mathbb{F}_{\bullet}$ of a module over a commutative Noetherian ring $R$ (we will generally work over a regular local ring), we call \it format \rm the sequence of the Betti numbers of $\mathbb{F}_{\bullet}$.

In \cite{W18} are constructed a generic ring ${\hat R}_{gen}$ and a generic complex $\FF^{gen}_\bullet$ associated to each format of free resolution of length three. Such ring and complex have a universal property, in the sense that any free resolution of that format in a given commutative ring can be obtained by the generic one by homomorphic image. This construction requires powerful tools from representation theory. A Kac-Moody Lie algebra is canonically associated to each format, and it turns out that the generic ring is Noetherian if and only if this Lie algebra is finite dimensional. This last fact identifies a special family of formats (called Dynkin formats) for which this condition holds. Ideals and modules whose minimal free resolutions are of Dynkin format are similarly called Dynkin. Dynkin ideals include Gorenstein ideals, almost complete intersections and few other formats.

It turns out that for Dynkin formats, the fact that ${\hat R}_{gen}$ is Noetherian allows to study what kinds of explicit resolutions one can get from it by localization.
More specifically, for Dynkin formats the ring ${\hat R}_{gen}$ exhibits certain lowest weight-highest weight symmetry and this means that one can construct over it another acyclic complex $\FF^{top}_\bullet$, which is a mirror image of the generic complex $\FF^{gen}_\bullet$. Calculation of the matrices of the complex $\FF^{top}_\bullet$ involves calculation of the higher structure theorems which give tensors generating ${\hat R}_{gen}$.

The procedure we follow in this paper comes from that construction. We start with a well-known acyclic complex of free modules of length 3 and we calculate higher structure theorems for generic choices of factorizations that appear, using so-called defect variables. We end up with three matrices which by construction give the differentials of a certain acyclic complex over a polynomial ring in the variables needed to define a Hilbert-Burch complex and the defect variables. 
In this way we describe the points in certain open subsets in the spectrum of ${\hat R}_{gen}$. 
More precisely, we deal with certain slice of the spectrum of the generic ring constructed in \cite{W18} (see remark \ref{rmk:slice}) and thus the constructed complexes are automatically acyclic over a field of characteristic zero.

Our starting complex in this paper is either a split exact complex or a direct sum of a generic Hilbert-Burch complex and an identity map.
Our formats are the ones corresponding to the root system $D_n$ in the combinatorial scheme of \cite{W18}.
Even though the procedure that we use comes from the generic ring, we try to make the paper independent of \cite{W18}.

For the format $(1,n,n,1)$ with $n$ even our method involves calculating the multiplicative structure and the so-called Bruns cycles (see \cite{W18}) for known non-minimal resolutions of that format. We do it for the split exact complex and for a direct sum of a Hilbert-Burch complex of format $(1,n,n-1)$ and an identity map $R\rightarrow R$.
The three matrices we get (they form the top components of the so-called critical representations $W(d_3)$, $W(d_2)$ and $W(d_1)$, see section \ref{sec:basic}) give the differentials of an acyclic complex.

For the format $(1,4,n,n-3)$ we follow similar procedure based on the results of \cite{W18}. Again we calculate higher structure theorems for a split complex of this format and for direct sums of the Hilbert-Burch complex of format $(1,4,3)$ and the identity map $R^{n-3}\rightarrow R^{n-3}$.

The complexes $\FF^{top}_\bullet$ constructed in this way have been already defined in \cite{LW19}, and we expect it to be useful in investigating more subtle properties of open subsets of the spectrum of the generic ring ${\hat R}_{gen}$. 
In a forthcoming paper we intend to produce similar resolutions starting from different non-minimal complexes involving Koszul complex of format $(1,3,3,1)$.

The complexes we  construct resolve certain modules $R/I$ where the  ideals $I$ have  grade $2$ and $pd_R(R/I)=3$, where $R$ is a polynomial ring in the defect variables for a given format.
We also show with examples that the specializations of our resolutions occur  "in nature". One more application is a new proof (independent of linkage) that for the formats $(1,n,n,1)$ ($n$ even) and $(1,4,n,n-3)$, two relevant open subsets of the spectrum of generic ring ${\hat R}_{gen}$, called $U_{CM}$ and $U_{split}$ are the same. 
Here $U_{CM}$ denotes the set of all prime ideals $P$ of ${\hat R}_{gen}$ for which the localization of the dual complex $(\FF^{gen}_\bullet )^*$ at $P$ is acyclic, i.e. the points where the localization of the generic module $H_0 (\FF^{gen}_\bullet)$ is perfect.
The open set $U_{split}$ consists of points for which the complex $\FF^{top}_\bullet$ is split exact.
A proof based on linkage of the fact that this two sets coincides for the above formats is discussed in the paper \cite{CVW18}.

The paper is organized as follows. 
In section \ref{sec:results} we state the results of the paper, i.e. we present two acyclic complexes of length 3.
In section \ref{sec:basic} we recall the basic facts related to generic ring, critical representations and defect variables.

In section \ref{sec:1nn1} we deal with format $(1,n,n,1)$ and calculate complexes $\FF^{top}_\bullet$ for our two classes of complexes. We give proofs independent of \cite{W18} of the fact that the complexes we get are acyclic.
In section \ref{sec:14nn-3}  we do the same for the format $(1,4,n,n-3)$. 

For each format, we also construct the complex $\FF^{top}_\bullet$ obtained starting from a split exact complex, by calculating higher structure theorems in similar way.
 Such complex can be also computed by localization of the appropriate complex $\FF^{top}_\bullet$ computed starting from a direct sum of a generic Hilbert-Burch and an identity map.
 
In Section \ref{sec:usplit}, we prove that the open sets $U_{CM}$ and $U_{split}$ are the same for formats $(1,n,n,1)$ and $n$ even and $(1,4,n,n-3)$. 
Finally in section \ref{sec:examples} we give examples of interesting modules whose resolutions are specializations of the complexes constructed in section \ref{sec:results}.

\section{Two complexes $\FF^{top}_\bullet$.}\label{sec:results}

We present the main results of the paper. Given an acyclic complex $\FF_{\bullet}$ of Dynkin format (except $A_n$, $D_n$ with $n$ odd, and the module format $(2,5,5,2)$), it is always possible to canonically construct a second complex of the same format called $\FF^{top}_\bullet$. The differentials of $\FF^{top}_\bullet$ are obtained computing the highest nonzero components of the three critical representation of the Lie algebra associated to the format of $\FF$ (see Section \ref{sec:basic} for the needed definitions). 

In this section we state the two main theorems describing the complex $\FF^{top}_\bullet$ associated to a complex $\FF_{\bullet}$ equal to the direct sum of an Hilbert-Burch complex and an identity map. The explicit method to compute the differentials will be explained in the next sections of this paper.

\subsection{The complex $\FF^{top}_\bullet$ for the format $(1,n,n,1)$}

Let $n \geq 4$ be an even number and let $X=  \left( X_{ij} \right)$ be a generic $n \times n-1$ matrix on variables $\lbrace X_{ij} \rbrace$. 
Let $k$ be an arbitrary field  and set
$R_0=k[X_{ij} ]$. Consider the free resolution 
\begin{equation}
\label{hilb1nn1}
\FF: 0 \longrightarrow F_3 \buildrel{d_3}\over\longrightarrow  F_2 \buildrel{d_2}\over\longrightarrow F_1 \buildrel{d_1}\over\longrightarrow R_0 
\end{equation}
on the free $R_0$-modules $F_1$,$F_2$,$F_3$ having bases $\lbrace e_1, \ldots, e_n \rbrace$, $\lbrace f_1, \ldots, f_{n} \rbrace$, $\lbrace g_1 \rbrace$ and differentials
$$d_3 =\bmatrix 0  \\ \vdots  \\ 0 \\ 1 \endbmatrix , 
d_2=\bmatrix X_{11} & X_{12} &  \ldots & X_{1,n-1} & 0\\  X_{21} & X_{22} & \ldots &  X_{2,n-1} & 0\\ \vdots & \vdots & \ldots & \vdots & \vdots \\ X_{n1} & X_{n2} &  \ldots & X_{n,n-1} & 0 \endbmatrix , 
d_1=\bmatrix Y_1 & Y_2 & \ldots &Y_n \endbmatrix $$ where $(-1)^{i+1}Y_i$ is equal to the maximal minor of the matrix $X$ obtained deleting the $i$-th row. 

Let us denote by $ X_{\hat{k}}^{\hat{i},\hat{j}} $ the minor of $X$ obtained by deleting the rows $i,j$ and the column $k$. Let $b_{ij}$ be new indeterminates over the ring $R_0$ defined for any $1 \leq i < j \leq n$ 
(such variables will be introduced in detail in Definition \ref{defect}).

  \begin{definition}
 \label{pfaffians}
 Let $B$ be the $n\times n$ skew symmetric matrix  in the variables $b_{ij}$, an define by $P_{i_1, \ldots, i_t}$ the pfaffian of the submatrix of $B$ defined by the $i_1, \ldots, i_t$ rows and columns, and similarly by $P_{\hat{i_1}, \ldots, \hat{i_t}}$ the pfaffian of the submatrix obtained removing the $i_1, \ldots, i_t$ rows and columns. We simply call $P$ the pfaffian of the whole matrix.
 \end{definition}
 
 \begin{definition}\label{def:rfor1nn1} For the remainder of this section we work over the polynomial ring $R$ over $R_0$
 on the defect variables $b_{ij}$.
 \end{definition}

\begin{thm}
\label{thhbn}
The complex $\FF^{top}_\bullet$ associated to the complex \ref{hilb1nn1} is equal to the minimal free resolution of the ideal $J=(u_1,\ldots,u_n) \subseteq R$ generated by the entries of $v^{(1)}_{top}$. In particular 
$u_i := \sum_{j \neq i}(-1)^{i+j}Y_j P_{\hat{i},\hat{j}}$.
The other differentials of $\FF^{top}_\bullet$  are:
 $$ v^{(2)}_{top}= \bmatrix 
-\sum_{j=1}^n X_{j1}b_{j1} & \ldots & -\sum_{j=1}^n X_{j,n-1}b_{j1} &  Y_1 \\  
\vdots & \ldots & \vdots & \vdots \\
\vdots & \ldots & \vdots & \vdots  \\ 
-\sum_{j=1}^n X_{j1}b_{jn}  & \ldots & -\sum_{j=1}^n X_{j,n-1}b_{jn}  &  Y_n  \\
 \endbmatrix, 
 v^{(3)}_{top}= \bmatrix v_1 \\ \vdots \\ v_{n-1} \\ P
 \endbmatrix,  $$
 where  $ v_k = \sum_{1 \leq i< j \leq n} (-1)^{i+j} X_{\hat{k}}^{\hat{i},\hat{j}} P_{\hat{i},\hat{j}} $ Moreover, $\FF^{top}_\bullet$ is acyclic.
\end{thm}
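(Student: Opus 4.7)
The plan is to establish acyclicity of $\FF^{top}_\bullet$ by invoking the Buchsbaum-Eisenbud acyclicity criterion; the remaining assertions then follow at once, since $H_0(\FF^{top}_\bullet)=R/J$ by construction and minimality is automatic because every entry of the three displayed matrices is a polynomial of positive degree in the variables $X_{ij}$ and $b_{ij}$.

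The first task is to verify that $\FF^{top}_\bullet$ is actually a complex, that is, $v^{(1)}_{top}\cdot v^{(2)}_{top}=0$ and $v^{(2)}_{top}\cdot v^{(3)}_{top}=0$. Each composition decomposes column by column into pfaffian identities. The vanishing of the last column of $v^{(1)}_{top}\cdot v^{(2)}_{top}$ is the identity $\sum_i Y_iu_i=\sum_{i\neq j}(-1)^{i+j}Y_iY_jP_{\hat{i},\hat{j}}=0$, which follows from the antisymmetry of $(-1)^{i+j}P_{\hat{i},\hat{j}}$ in the pair $(i,j)$ under the natural sign convention on sub-pfaffians. The first $n-1$ columns give relations of the form $\sum_{i,j}u_iX_{jk}b_{ji}=0$; these reduce, after expanding $u_i$ and applying pfaffian Laplace expansion along a row of $B$, to the Hilbert-Burch identity $\sum_i X_{ik}Y_i=0$ for $\FF$. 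An analogous column-by-column computation, now using $d_2d_3=0$, handles $v^{(2)}_{top}\cdot v^{(3)}_{top}$.

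Next I would check the Buchsbaum-Eisenbud hypotheses. The ranks of $v^{(1)}_{top},v^{(2)}_{top},v^{(3)}_{top}$ must equal $1,n-1,1$ respectively; these are witnessed by the explicit nonzero minors $P\in I_1(v^{(3)}_{top})$, any nonvanishing maximal minor of the $X$-block inside $v^{(2)}_{top}$, and any $u_i$. The three required grade estimates are $\mathrm{grade}\,I_1(v^{(1)}_{top})\geq 1$ (trivial, since $J\neq 0$), $\mathrm{grade}\,I_{n-1}(v^{(2)}_{top})\geq 2$, and $\mathrm{grade}\,I_1(v^{(3)}_{top})\geq 3$. For the second, the $(n-1)$-minors contain products built from the generic matrix $X$ and the generic skew matrix $B$; because the $X$- and $b$-variables are algebraically independent, the classical grade bound for ideals of minors of generic and generic skew matrices already forces grade at least $2$.

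The main obstacle is the grade-$3$ estimate $\mathrm{grade}(P,v_1,\ldots,v_{n-1})\geq 3$. Since $P$ is an irreducible polynomial in the $b$-variables, it provides one step; the heart of the matter is to show that modulo $P$ the ideal $(v_1,\ldots,v_{n-1})$ has grade at least $2$. My plan is to specialize the defect variables so that $\FF^{top}_\bullet$ recovers the Buchsbaum-Eisenbud pfaffian resolution of a generic Gorenstein grade-$3$ ideal, where the required grade bound is classical, and then transport the bound back to $R$ by upper semicontinuity of depth along the specialization map. Alternatively, one may argue directly by enumerating the minimal primes of $(P,v_1,\ldots,v_{n-1})$ and verifying that each contains either an additional sub-pfaffian of $B$ or an additional maximal minor of $X$, thus pushing the height above $2$.
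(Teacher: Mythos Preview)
Your proposal and the paper diverge on what the theorem is primarily asserting. In the paper, $\FF^{top}_\bullet$ is defined a priori via the critical representations $W(d_i)$ of the generic ring, and the bulk of the proof (Propositions~\ref{hbn2}, \ref{hbn3}, \ref{hbn1}) consists of inductively lifting Bruns cycles to \emph{compute} $v^{(2)}_{top}$, $v^{(3)}_{top}$, $v^{(1)}_{top}$ and thereby arrive at the displayed matrices. You take those matrices as given and address only acyclicity and minimality, which omits what the paper treats as the main content of the theorem.

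For acyclicity itself the routes also differ. The paper does not run Buchsbaum--Eisenbud in the Hilbert--Burch case: acyclicity is inherited from the generic ring, since $R$ is the coordinate ring of a slice of an open set of $\mathrm{Spec}\,\hat R_{gen}$ on which $\FF^{top}_\bullet$ is already known to be acyclic (Remark~\ref{rmk:slice}). Your direct Buchsbaum--Eisenbud argument is closer in spirit to what the paper carries out only for the \emph{split} case (Theorem~\ref{thsplitn}), where the depth-$3$ condition holds because three submaximal pfaffians of $B$ appear among the entries of $v^{(3)}_{top}$ and form a regular sequence.

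Your grade-$3$ step has a concrete gap. Specializing the defect variables $b_{ij}$ destroys the pfaffians $P$ and $P_{\hat i,\hat j}$ rather than isolating them, so one does not recover a generic Buchsbaum--Eisenbud pfaffian complex that way. The specialization that works goes in the other direction: send the Hilbert--Burch variables $X_{ij}$ to the split matrix (identity block and zero column). Then $v^{(3)}_{top}$ specializes exactly to the column $(-P_{\hat 1,\hat n},\ldots,-P_{\widehat{n-1},\hat n},P)^t$ of Theorem~\ref{thsplitn}, whose entries generate an ideal of grade $3$ in $k[b_{ij}]$, and semicontinuity of grade under adjoining the $X$-variables lifts this bound back to $R$. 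With that correction your acyclicity argument can be completed, but it amounts to reducing to the paper's split-case proof rather than supplying an independent route.
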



\subsection{The complex $\FF^{top}_\bullet$ for the format $(1,4,m+3,m)$}

Let $m \geq 1$ and let $X=  \left( X_{ij} \right)$ be a generic $4 \times 3$ matrix on variables $\lbrace X_{ij} \rbrace$. Let $k$ be a field of characteristic zero and set
$R_0=k[X_{ij} ]$. Consider the free resolution 
\begin{equation}
\label{hilb14m+3m}
\FF: 0 \longrightarrow F_3 \buildrel{d_3}\over\longrightarrow  F_2 \buildrel{d_2}\over\longrightarrow F_1 \buildrel{d_1}\over\longrightarrow R_0 
\end{equation}
on the free $R_0$-modules $F_1$,$F_2$,$F_3$ having bases $\lbrace e_1, e_2, e_3, e_4\rbrace$, $\lbrace f_1, \ldots, f_{m+3} \rbrace$, $\lbrace g_1, \ldots, g_m\rbrace$ and differentials
$$d_3 =\bmatrix 0 & \ldots & 0 \\ 0 & \ldots  & 0 \\ 0 & \ldots & 0 \\ & I_n & \endbmatrix , d_2=\bmatrix X_{11} & X_{12} & X_{13} & 0 & \ldots  & 0\\  X_{21} & X_{22} & X_{23} &0 & \ldots  & 0\\ X_{31} & X_{32} & X_{33} &0 & \ldots  & 0\\ X_{41} & X_{42} & X_{43} &0 & \ldots  & 0 \endbmatrix , d_1=\bmatrix Y_1 & Y_2 & Y_3 &Y_4 \endbmatrix $$ where $(-1)^{i+1}Y_i$ is equal to the maximal minor of the matrix $X$ obtained deleting the $i$-th row. 

Denote by $ X_{\hat{k}}^{i,j} $ the $2 \times 2$ minor of $X$ obtained by taking only the rows $i,j$ and deleting the column $k$. 



Let $b_{ij}^k,c_{ut}$ be new indeterminates over the ring $R_0$ defined for any $1 \leq i < j \leq 4$, $1 \leq k \leq m$. and $1 \leq u < t \leq m.$ We give the following definitions:

\begin{definition}
\label{minors-pfaffians}
We denote by $B$ the matrix of size $m \times 6$ whose entries are the variables $b_{ij}^k$ and by $B_{i_1j_1, \ldots, i_rj_r}^{k_1\cdots k_r}$ the minor obtained considering the square submatrix of size $r \leq m$ on all the variables $b_{ij}^k$ such that $\lbrace i,j \rbrace \in \lbrace \lbrace i_1,j_1 \rbrace, \ldots, \lbrace i_r,j_r \rbrace \rbrace$, 
$k \in \lbrace k_1, \ldots, k_r \rbrace$. We also consider some mixed-pfaffians of $B$ with the notation $$ P_{u,t}= \frac{1}{2}[b_{12}^ub_{34}^t - b_{13}^ub_{24}^t+ b_{14}^ub_{23}^t+b_{34}^ub_{12}^t-b_{24}^ub_{14}^t+b_{23}^ub_{14}^t  ], $$ for $1 \leq u,t \leq m$.
Similarly, considering the skew-symmetric $m \times m$ matrix on the variables $c_{ut}$, we denote by $\Gamma_{i_1, \ldots, i_t}$ and $\Gamma_{\hat{i_1}, \ldots, \hat{i_t}}$ the pfaffians obtained by respectively taking or removing the $i_1, \ldots, i_t$ rows and columns. If $m$ is even, we denote by $\Gamma$ the pfaffian of the whole matrix.
\end{definition}

 \begin{definition}\label{def:rfor1nn1} For the remainder of this section we work over the polynomial ring $R$ over $R_0$
 on the defect variables $b_{ij}^k, c_{ut}$.
 \end{definition}



\begin{thm}
\label{thhbm}
Let $m \geq 4$. The complex $\FF^{top}_\bullet$ is the minimal free resolution over $R$ of an ideal $J_m^{\prime}=(w_1,w_2,w_3,w_4)$ generated by the entries of $v^{(1)}_{top}$. Assume $i,j,k,r=1,2,3,4$. If $m=2s$ is even
$$ w_i= Y_i\Gamma +\sum_{u,t}(Y_jB^{ut}_{ik,ir}-Y_kB^{ut}_{ij,ir}+Y_rB^{ut}_{ij,ik})\Gamma_{\hat{u},\hat{t}},$$ 
If $m=2s+1$ is odd
$$w_i= Y_i \sum_{u,t,v}B^{utv}_{jk,jr,kr} \Gamma_{\hat{u},\hat{t},\hat{v}} +\sum_{u}(Y_jb^{u}_{kr}-Y_kb^{u}_{jr}+Y_rb^{u}_{jk})\Gamma_{\hat{u}}.$$ In both ideals the sums are taken over all the required $1 \leq u < t < v \leq m.$
The other matrices of the top maps associated to the complex \ref{hilb14m+3m} are:
$$ v^{(3)}_{top}= \bmatrix 
 \sum (-1)^{i+j+1} X_{\hat{1}}^{ij}b_{ij}^1 & \sum (-1)^{i+j+1} X_{\hat{1}}^{ij}b_{ij}^2  & \ldots  & \sum (-1)^{i+j+1} X_{\hat{1}}^{ij}b_{ij}^m
 \\  \sum (-1)^{i+j} X_{\hat{2}}^{ij}b_{ij}^1 & \sum (-1)^{i+j} X_{\hat{2}}^{ij}b_{ij}^2  & \ldots  & \sum (-1)^{i+j} X_{\hat{2}}^{ij}b_{ij}^m
 \\ \sum (-1)^{i+j+1} X_{\hat{3}}^{ij}b_{ij}^1 & \sum (-1)^{i+j+1} X_{\hat{3}}^{ij}b_{ij}^2  & \ldots  & \sum (-1)^{i+j+1} X_{\hat{3}}^{ij}b_{ij}^m
 \\ P_{1,1} & P_{1,2}+c_{12}  & \ldots  & P_{1,m}+c_{1m} 
 \\ \vdots & \vdots  & \vdots  & \vdots
 \\  P_{m,1}+c_{m1}    &  P_{m,2}+c_{m2}   &   \ldots  & P_{m,m}
 \endbmatrix,  $$
  where all the sums are taken over $1 \leq i < j \leq 4$,
  $$
   v^{(2)}_{top}= \bmatrix 
\lbrace e_1,f_1 \rbrace & \lbrace e_1,f_2 \rbrace & \ldots & \lbrace e_1,f_{m+3} \rbrace   \\ 
 \lbrace e_2,f_1 \rbrace & \lbrace e_2,f_2 \rbrace & \ldots & \lbrace e_2,f_{m+3} \rbrace  \\ 
\lbrace e_3,f_1 \rbrace & \lbrace e_3,f_2  \rbrace & \ldots & \lbrace e_3,f_{m+3} \rbrace  \\ 
\lbrace e_4,f_1 \rbrace & \lbrace e_4,f_2 \rbrace  & \ldots & \lbrace e_4,f_{m+3} \rbrace  \\ 
 \endbmatrix $$
  where, for $i,j,k,r=1,2,3$, $l \leq 3$, $h\geq 4$ and $1 \leq u < t < v \leq m$, if $m$ is even:
 $$\lbrace e_i,f_l \rbrace=  X_{il}\Gamma +\sum_{u,t}(X_{jl}B^{ut}_{jr,jk}-X_{kl}B^{ut}_{kj,kr}+X_{rl}B^{ut}_{rk,rj})\Gamma_{\hat{u},\hat{t}} ,  $$ 
$$\lbrace e_i,f_h \rbrace= Y_{i}\sum_{u,t,v \neq h-3}B^{utv}_{kr,jr,jk}\Gamma_{\hat{u},\hat{t},\hat{v},\widehat{h-3}} +\sum_{u \neq h-3}(Y_{j}b^{u}_{kr}-Y_{k}b^{u}_{jr}+Y_{r}b^{u}_{jk})\Gamma_{\hat{u},\widehat{h-3}}.  $$
 If instead $m$ is odd:  
 $$\lbrace e_i,f_l \rbrace=  X_{il} \sum_{u,t,v}B^{utv}_{ij,ik,ir} \Gamma_{\hat{u},\hat{t},\hat{v}} +\sum_{u}(X_{jl}b^{u}_{ij}-X_{kl}b^{u}_{ik}+X_{rl}b^{u}_{ir})\Gamma_{\hat{u}},  $$ 
$$\lbrace e_i,f_h \rbrace= Y_i \Gamma_{\widehat{h-3}} +\sum_{u,t \neq h-3}(Y_jB^{ut}_{ir,ik}-Y_kB^{ut}_{ir,ij}+Y_rB^{ut}_{ik,ij})\Gamma_{\hat{u}, \hat{t}, \widehat{h-3}}.  $$
Moreover, $\FF^{top}_\bullet$ is acyclic.
\end{thm}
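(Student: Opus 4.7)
My plan is to follow the template used for Theorem \ref{thhbn}: compute the higher structure theorems of the starting complex $\FF_\bullet$ (the direct sum of the generic Hilbert-Burch resolution of format $(1,4,3)$ and the identity on $R_0^m$), introduce defect variables to parameterize the generic choices, read off the three matrices $v^{(i)}_{top}$ as the top components of the critical representations $W(d_i)$ associated to the $D_{m+3}$ root system, and verify acyclicity via Buchsbaum--Eisenbud. Because $\FF_\bullet$ splits into a Hilbert-Burch block of rank $3$ plus an identity block of rank $m$ inside $F_2$, all non-trivial structure constants live in the cross components. The pairings $F_1 \otimes F_2 \to F_3$ sending a Hilbert-Burch factor into the identity block are parameterized precisely by the $b_{ij}^k$, while the second-round skew correction between two identity-block factors is parameterized by the $c_{ut}$.

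Once the structure tensors are written down, the entries of $v^{(3)}_{top}$, $v^{(2)}_{top}$, $v^{(1)}_{top}$ are obtained by contracting them with the Hilbert-Burch data $X_{ij}$, $Y_i$ in the combinatorial pattern dictated by $D_{m+3}$. The split between even and odd $m$ in the formulas reflects that the top weight vector of the relevant representation lies in a different irreducible summand according to parity, since Pfaffians of odd-size skew matrices vanish; this is exactly what forces the swap between $\Gamma$ and $\Gamma_{\hat u}$ contractions between the two cases. The characteristic-zero hypothesis enters here through the use of Schur functors and the Lie-theoretic decomposition underlying the construction.

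Next I would verify the complex conditions $v^{(1)}_{top}\, v^{(2)}_{top} = 0$ and $v^{(2)}_{top}\, v^{(3)}_{top} = 0$ by direct expansion. These reduce to two families of identities: Laplace expansions of the $2 \times 2$ and $3 \times 3$ minors of $X$ paired with the maximal minors $Y_i$, and Pfaffian expansion identities for the $\Gamma_{\hat i_1, \ldots, \hat i_r}$ together with the combinatorial relations tying the minors $B^{u_1 \ldots u_r}_{i_1 j_1, \ldots, i_r j_r}$ to the mixed Pfaffians $P_{u,t}$. Because of the parity split, each identity must be checked twice, but the underlying combinatorics is parallel and mirrors the bookkeeping already carried out for the $(1,n,n,1)$ case.

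The main obstacle is the final acyclicity step. The ranks of the differentials are $1,3,m$, matching the alternating-sum condition, so all that remains are the grade estimates $\text{grade}\, I_1(v^{(1)}_{top}) \geq 1$, $\text{grade}\, I_3(v^{(2)}_{top}) \geq 2$, and $\text{grade}\, I_m(v^{(3)}_{top}) \geq 3$ of the Buchsbaum--Eisenbud criterion. The hardest is the last: I would obtain it by specializing the defect variables $b_{ij}^k, c_{ut}$ to a carefully chosen point where $v^{(3)}_{top}$ acquires a staircase or block form that exposes an explicit regular sequence of length three inside the ideal of its $m \times m$ minors, and then appeal to upper semicontinuity of grade to conclude in the generic case. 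The grade-$2$ bound for the middle matrix follows from the fact that the ideal $J_m'$ cuts out a grade-$2$ locus (non-perfect, as stated), and the grade-$1$ bound on $v^{(1)}_{top}$ is immediate because $J_m'$ is non-zero. This is the step I expect to absorb most of the technical work.
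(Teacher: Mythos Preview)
Your computational outline for deriving the three matrices $v^{(i)}_{top}$ is essentially the paper's approach: the paper computes the multiplication maps $v^{(3)}_1$, $v^{(2)}_1$, $v^{(1)}_1$ explicitly using the minor relations of Lemma~\ref{relbis}, then obtains the higher maps inductively by lifting the Bruns cycles $q^{(1)}_n = v^{(1)}_{n-1}\otimes v^{(3)}_1 - v^{(1)}_{n-2}\wedge v^{(3)}_2$ and $q_n = v^{(2)}_{n-1}\otimes v^{(3)}_1 - v^{(2)}_{n-2}\wedge v^{(3)}_2 + v^{(1)}_{n-1}\otimes 1_{F_2}$ (Propositions~\ref{hbm3}, \ref{hbm1}, \ref{hbm2}). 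The parity split and the role of the two layers of defect variables are exactly as you describe. One point you gloss over: in this format the lift of $q(\varepsilon)$ defining $v^{(3)}_2$ is no longer unique (unlike the rank-one $F_3$ case), and the paper has to add a correction term $\Gamma_n(e_i)$ to $q^{(1)}_n$ before it lifts to $\bigwedge^n F_3$; this is where the $c_{ut}$ genuinely enter the inductive machinery.

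Where you diverge from the paper is acyclicity. The paper does \emph{not} attempt Buchsbaum--Eisenbud for this format. Instead it invokes Remark~\ref{rmk:slice}: the ring $R$ is the coordinate ring of a slice of the open set $U(r_3,d_3)\subset\mathrm{Spec}(\hat R_{gen})$, and the complex you are writing down is just the restriction of the $\FF^{top}_\bullet$ already shown to be acyclic over $\hat R_{gen}$ in \cite{LW19}. Acyclicity therefore comes for free in characteristic zero from the representation-theoretic machinery of \cite{W18}; this is precisely why the hypothesis on $k$ is imposed for this format, and why the authors only \emph{conjecture} acyclicity in characteristic $\ne 2$ in the remark immediately following the theorem. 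Your proposed specialization argument for the grade-$3$ bound on $I_m(v^{(3)}_{top})$, if it could be made to work, would actually prove more than the paper does, since it would be characteristic-free and independent of \cite{W18}. The authors evidently did not find such a specialization, so you should treat that step as a genuine obstacle rather than a routine verification; your sketch does not yet identify a candidate regular sequence, and the mixed $X$--$b$--$c$ structure of $v^{(3)}_{top}$ makes this less transparent than in the $(1,n,n,1)$ case, where submaximal pfaffians of a single generic skew matrix are available.
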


\begin{remark} We conjecture that the complex $\FF^{top}_\bullet$ of Theorem \ref{thhbm} is acyclic when $k$ is a field of characteristic $\ne 2$.
\end{remark}


\section{General setting}\label{sec:basic}

Here we describe the general setting for the next sections of this article and the procedure that we are using.
Let $R_0$ be a commutative Noetherian ring of characteristic zero (usually it will be a polynomial ring in several variables) and consider the acyclic complex of $R_0$-modules
\begin{equation}
\label{complex}
\FF: 0 \longrightarrow F_3 \buildrel{d_3}\over\longrightarrow  F_2 \buildrel{d_2}\over\longrightarrow F_1 \buildrel{d_1}\over\longrightarrow F_0 = R_0. 
\end{equation}

We denote by $r_i$ the rank of $F_i$ and say that the complex $\FF$ has format $(1,r_1,r_2,r_3)$. 
The basis of $F_1,F_2,F_3$ will be respectively denoted by $\lbrace e_1, \ldots, e_{r_1}\rbrace$, $\lbrace f_1, \ldots, f_{r_2} \rbrace$, $\lbrace g_1, \ldots, g_{r_3}\rbrace$. In our case we will consider different examples of complexes of formats $(1,n,n,1)$ and $(1,4,m+3,m)$ for $n \geq 4$, $m \geq 1$. They correspond (according to the combinatorial scheme of \cite{W18}) to the Dynkin diagram $D_n$.

To construct a generic ring for free resolutions of format $D_n$, we define the defect variables in the following way:

\begin{definition}
\label{defect}
For $1 \leq i,j \leq r_1$ and $1 \leq k,u,t \leq r_3$, we introduce two sets of indeterminates $b_{ij}^{k}$ and $c_{ut}$ over the ring $R_0$. These are called \it defect variables. \rm      
We set 
$$ R=R_0[\lbrace b_{ij}^k \rbrace, \lbrace c_{ut} \rbrace] $$
 modulo by all the relations necessary to make the defect variables skew-symmetric in $i,j$ and $u,t$, that is $b_{ij}^{k}= - b_{ji}^{k}$, $b_{ii}^{k}=0$, $c_{ut}= - c_{tu}$, $c_{uu}=0$. Notice that for the format $(1,n,n,1)$ no variables of the form $c_{ut}$ are introduced.
\end{definition}

\begin{remark}\label{rmk:slice} The ring $R$ is related to the generic ring ${\hat R}_{gen}$ introduced in \cite{W18}. Let $r_3$ denote the rank of the third differential $d_3$ of the complex $\FF^{gen}_\bullet$. Let also $U(r_3, d_3)$ be the open set in $Spec ({\hat R}_{gen})$ of points such that the ideal of maximal minors of $d_3$ is nonzero. 
The ring $R$ is the coordinate ring of a slice $V$ of $U(r_3, d_3)$
corresponding to points where the particular $r_3\times r_3$ minor of the differential $d_3$ is a unit, when the complex $\FF^{gen}_\bullet$ is reduced to canonical form.
By a slice we mean a closed subset $V$ such that every $GL(\FF^{gen})_\bullet$-orbit in $U(r_3, d_3)$ intersects $V$.
Thus for all our purposes we can treat $R$ as a localization of ${\hat R}_{gen}$.
Thus it is clear that both complexes $\FF^{gen}_\bullet$ and $\FF^{top}_\bullet$ constructed in \cite{LW19} are acyclic over $R$.
The complex $\FF$ we start with is just the restriction of the complex $\FF^{gen}_\bullet$ to our set.
The complexes we construct are restrictions of $\FF^{top}_\bullet$ to our set.
Note also that the defect variables correspond to the positive part of Lie algebra $\mathfrak{g}(T_{p,q,r})$ introduced in \cite{W18}. Furthermore, recall that because of the highest-lowest weight symmetry in ${\hat R}_{gen}$ (with respect to the Lie algebra $\mathfrak{g}(T_{p,q,r})$) the roles of the complexes $\FF^{gen}_\bullet$ and $\FF^{top}_\bullet$ are symmetric.
\end{remark}
Throughout the paper we will work with so-called critical representations $W(d_3)$, $W(d_2)$, $W(d_1)$. These are the isotypic components of the generic ring ${\hat R}_{gen}$ with respect to the action of the group $GL(F_2)\times GL(F_0)$, corresponding to three differentials in the generic complex $\FF^{gen}_\bullet$.

For the format $(1,n,n,1)$ we deal with the Lie algebra
${\mathfrak{g}}(D_n)={\mathfrak{so}} (F^*_1\oplus F_1 )$.
The critical representations are:
$$W(d_3)=F^*_2\otimes [\oplus_{k=0}^{{n\over 2}}S_{1-k}F_3\otimes\bigwedge^{2k}F_1],$$
$$W(d_2)=F_2\otimes [F_1^*\oplus F_3^*\otimes F_1],$$
$$W(d_1)={\mathbb{C}} \otimes [\sum_{k=0}^{{n\over 2}-1} S_{-k}F_3\otimes\bigwedge^{2k+1}F_1].$$
For the format $(1,4,m+3,m)$ the Lie algebra corresponding to $T_{p,q,r}$ is ${\mathfrak{g}}(D_n)={\mathfrak{so}} (F^*_3\oplus\bigwedge^2 F_1\oplus F_3)$ and critical representations are:
$$W(d_3)=F_2^*\otimes [F_3\oplus \bigwedge^2 F_1\oplus F_3^*\otimes\bigwedge^4 F_1],$$
$$W(d_2)=F_2\otimes [F_1^*\otimes\bigwedge^{even}F_3^*\oplus F_1\otimes\bigwedge^{odd}F_3^*],$$
$$W(d_1)=\mathbb{C}\otimes [F_1^*\otimes\bigwedge^{odd}F_3^*\oplus F_1\otimes\bigwedge^{even}F_3^*].$$
One can easily see that the maps corresponding to the zero-graded components of these representations are exactly the differentials $d_3,d_2,d_1$ of the complex $\FF$. The components of degree one correspond to maps $v^{(3)}_1: \bigwedge^2 F_1 \to F_2$, $v^{(2)}_1:  F_1 \otimes F_2 \to F_3$, $v^{(1)}_1: \bigwedge^3 F_1 \to F_3$ and represent the well-known multiplicative structure on free resolutions of length 3 (see \cite{BE77}).

Our aim is to describe a general method to calculate the maps corresponding to all the graded components of $W(d_3),W(d_2),W(d_1)$ in terms of elements of $R$ and of the defect variables.
This can be done iteratively for large values of $n,m$ and, even if we do it for particular examples of interesting complexes, this same method can be applied to any resolution of the same format.
Since we are dealing with free resolutions of Dynkin type only finitely many maps are nonzero and the maps $v^{(3)}_{top}$,$v^{(2)}_{top}$,$v^{(1)}_{top}$ associated to the largest nonzero graded components are well-defined.
 For format $(1,n,n,1)$ with $n$ even and for any case of format $(1,4,m+3,m)$ these top maps are the differentials of a complex $\FF^{top}_\bullet$ defined over the ring $R$ and having the same format of $\FF$. In these cases, this complex is acyclic and hence its specializations to other commutative rings could be used to define classes of ideals having free resolution of a given format.


We give some extra indication on the notation that we are going to use. For an element $\alpha \in F_i$, we simply write $d(\alpha)$ to denote $d_i(\alpha) \in F_{i-1}$. Similarly, if $\alpha$ is an element of some exterior power, tensor product or a Schur functor on the modules $F_1,F_2,F_3$, $d(\alpha)$ is defined using the standard Leibniz rule.
For example, to compute the product $e_i^.e_j \in F_2$ (corresponding to the image of $e_i \wedge e_j$ by the map $v^{(3)}_1: \bigwedge^2 F_1 \to F_2$), we set $$d(e_i^.e_j)= d(e_i)e_j - d(e_j)e_i$$ and, using exactness of the complex $\FF$, we can find an element $\beta \in F_2$ such that $d(\beta)= d(e_i^.e_j)$. It now follows that $e_i^.e_j$ can be expressed by the sum of $\beta$ with any element of the kernel of $f_2$. Defect variables are used to define a generic element of this kernel, using the fact that this is equal to the image of $d_3$. Indeed, in this case we write $$e_i^.e_j = \beta + b_{ij}^1 d(g_1)+ \ldots + b_{ij}^m d(g_{r_3}).$$
For compactness of notation we prefer to set:

\begin{equation}
\label{bvar}
G_{ij}:=b_{ij}^1 g_1+ \ldots + b_{ij}^m g_{r_3}, 
 \end{equation}
and then $d(G_{ij})=b_{ij}^1 d(g_1)+ \ldots + b_{ij}^m d(g_{r_3}).$

Similar computation methods will be used for all the maps $v^{(i)}_j$. 
For the defect variables $c_{ut}$, we use the following compact notation: 

\begin{equation}
\label{cvar}
 C= \sum_{1\leq u < t \leq m} c_{ut} (g_u \wedge g_t) 
 \end{equation}
 and, similarly, $d(C)= \sum_{1\leq u < t \leq m} c_{ut} d(g_u \wedge g_t). $
 Moreover, higher forms of $C$ corresponding to generic pfaffians in the variables $ c_{ut} $ are introduced: 
 
 \begin{definition}
 \label{highC}
 Setting $ C^{(1)} := C $, inductively for $s \geq 2$ define
 $C^{(s)} \in \bigwedge^{2s} F_3$ 
  such that $$ d( C^{(s)}) := d(C) \wedge C^{(s-1)}. $$ This is well defined since $d(C^{(s-1)} \wedge d(C)) = 0$.
  By convention, set
 $ C^{(0)} := 1 $, $C^{(-1)} :=  0$.
 \end{definition}

The entries of the matrix of differentials of the complex $\FF^{top}_\bullet$ that we compute explicitly will be expressed as a combination of elements of $R_0$, minors or pfaffians of the generic matrix in the variables $ b_{ij}^k $ and pfaffians of the generic matrix in the variables $c_{ut}$.

In general we will use the notation $e_i^.e_j$, $e_i^.f_h$, $ e_i^.e_j^.e_k $ to express the result of the multiplication maps $v^{(3)}_1$,$v^{(2)}_1$,$v^{(1)}_1$. For higher maps $v^{(l)}_s$ we will write $v^{(l)}_s(e_{i_1}, \ldots, e_{i_t})$ to say that we apply the map on vectors $ e_{i_1}, \ldots, e_{i_t} $ or $v^{(l)}_s(\hat{e_{i_1}}, \ldots, \hat{e_{i_t}})$ to say that we are applying the map to all the generators of $F_1$ except $ e_{i_1}, \ldots, e_{i_t} $. The analogous notation is used for vector $f_i$,$g_i$ when needed.

\section{Format $(1,n,n,1)$}\label{sec:1nn1}

In this section we consider two complexes of formats $(1,n,n,1)$ 
and we describe how to compute all the maps $v^{(i)}_j$ and the complex $\FF^{top}_\bullet$. We provide an explicit computation for the split exact case, since, even if it can be obtained by localizing the complex obtained in the Hilbert-Burch case, it is instructive to describe our computation from the beginning in this case, which is the easiest. The result is presented in Theorem \ref{thsplitn} and all the required computations will follow it.
Along the same lines of this computation, one can compute the complex $\FF^{top}_\bullet$ associated to any given acyclic complex of length 3 of this format. 

In the second part of this section we consider the Hilbert-Burch case and describe the computation of the differentials of the complex presented in Theorem \ref{thhbn}.

\subsection{Split exact complex}
Let us start by choosing $R_0$ to be any field of characteristic zero.
Let $n = 2s \geq 4$ and consider the acyclic complex 
\begin{equation}
\label{splitcomplexn}
\FF : 0 \longrightarrow F_3 \buildrel{d_3}\over\longrightarrow  F_2 \buildrel{d_2}\over\longrightarrow F_1 \buildrel{d_1}\over\longrightarrow R_0 
\end{equation}
on the free $R_0$-modules $F_1$,$F_2$,$F_3$ having bases $\lbrace e_1, \ldots, e_n \rbrace$, $\lbrace f_1, \ldots, f_{n} \rbrace$, $\lbrace g \rbrace$ and differentials
$$d_3 =\bmatrix 0  \\ \vdots  \\ 0 \\ 1 \endbmatrix , 
d_2=\bmatrix 1 & 0 &  \ldots & 0 & 0\\  0 & 1 & \ldots &  0 & 0\\ \vdots & \vdots & \ddots & \vdots & \vdots \\ 0 & 0 & \ldots &  1 & 0 \\
0 & 0 &  \ldots & 0 & 0 \endbmatrix , 
d_1=\bmatrix 0 & 0 & \ldots &1 \endbmatrix. $$

We are going to prove the following theorem, expressed using the notation of Definition \ref{pfaffians}.

\begin{thm}
\label{thsplitn}
Let $n=2s \geq 4$. 
The differentials of the complex $\FF^{top}_\bullet$ associated to the complex (\ref{splitcomplexn}) are:
$$ v^{(1)}_{top} = \bmatrix P_{\hat{1},\hat{n}} & -P_{\hat{2},\hat{n}} & \ldots & P_{\hat{n-1},\hat{n}} & 0  
 \endbmatrix, $$ 
 $$ v^{(2)}_{top}= \bmatrix 
0 & -b_{12} &  -b_{13} & \ldots & -b_{1,n-1} & 0 \\  
b_{12} & 0 & -b_{23} & \ldots & -b_{2,n-1} & 0 \\ 
b_{13} & b_{23} & 0 & \ldots & -b_{3,n-1} & 0 \\
\vdots & \vdots & \vdots & \ddots & \vdots & \vdots \\
b_{1,n-1} & b_{2,n-1} & b_{3,n-1} & \ldots & 0 & 0  \\ 
b_{1n} & b_{2n} & b_{3n} & \ldots & b_{n-1,n}  &  1  \\
 \endbmatrix,
 v^{(3)}_{top}= \bmatrix -P_{\hat{1},\hat{n}} \\ P_{\hat{2},\hat{n}} \\ \vdots \\ -P_{\hat{n-1},\hat{n}} \\ P
 \endbmatrix.  $$
 Moreover, the complex $\FF^{top}_\bullet$ is acyclic.
\end{thm}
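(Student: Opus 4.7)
The plan is to carry out the general procedure of Section~\ref{sec:basic} for the split-exact $\FF$, in which every intermediate lifting is transparent, and then to verify acyclicity of $\FF^{top}_\bullet$ by reorganizing it, after an invertible change of basis, as a direct sum of known acyclic complexes.

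First I would compute the multiplication maps $v^{(3)}_1$, $v^{(2)}_1$, $v^{(1)}_1$ and then inductively all higher graded components of the critical representations $W(d_3)$, $W(d_2)$, $W(d_1)$, using the Leibniz rule and the defect variables $b_{ij}$ to parametrize generic lifts. Because $d_1$ kills $e_1,\ldots,e_{n-1}$ and $\ker(d_2)=\mathrm{im}(d_3)=R_0 f_n$, the computations are purely combinatorial: for $i<j<n$ one has $d(e_i\cdot e_j)=0$ and hence $e_i\cdot e_j=b_{ij}f_n$, while the products involving $e_n$ account for the isolated $1$'s in the last row and column of $v^{(2)}_{top}$. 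Iterating, entries of the higher maps $v^{(l)}_s$ are alternating sums of degree-$s$ monomials in the $b_{ij}$ which antisymmetrize into pfaffians of principal submatrices of the generic skew-symmetric matrix $B=(b_{ij})$. At the top graded components one recovers the matrices $v^{(1)}_{top}$, $v^{(2)}_{top}$, $v^{(3)}_{top}$ of the statement.

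Next, the complex conditions $v^{(1)}_{top}\cdot v^{(2)}_{top}=0$ and $v^{(2)}_{top}\cdot v^{(3)}_{top}=0$ reduce to two pfaffian identities: the vector $v=((-1)^i P_{\hat i,\hat n})_{i=1}^{n-1}$ lies in the kernel of the odd-size skew-symmetric block $B^{(n-1)}$ (the classical Buchsbaum--Eisenbud syzygy for maximal pfaffians), and the Laplace-type expansion $\sum_{j=1}^{n-1}(-1)^{j+1}b_{jn}P_{\hat j,\hat n}=P$ along row $n$ of $B$ couples the last row and column of $v^{(2)}_{top}$ with the entries $P$ and $P_{\hat i,\hat n}$ of $v^{(3)}_{top}$.

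For acyclicity I would use the block form
\[
v^{(2)}_{top}=\begin{pmatrix}-B^{(n-1)}&0\\ b_{*n}^T&1\end{pmatrix},\qquad b_{*n}=(b_{1n},\ldots,b_{n-1,n})^T,
\]
and apply the invertible column change of basis $Q=\left(\begin{smallmatrix}I_{n-1}&0\\-b_{*n}^T&1\end{smallmatrix}\right)$. This transforms $v^{(2)}_{top}$ into $\mathrm{diag}(-B^{(n-1)},1)$, turns $v^{(3)}_{top}$ into $(v,0)^T$ using the identity $b_{*n}^T v=-P$, and leaves $v^{(1)}_{top}$ unchanged since its last entry is already zero. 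The transformed complex splits as the direct sum of a contractible summand $0\to R\xrightarrow{1}R\to 0$ (sitting in homological degrees $1,2$) and the Buchsbaum--Eisenbud resolution of the Gorenstein codimension-$3$ ideal of maximal pfaffians of the generic odd-size skew-symmetric matrix $B^{(n-1)}$. Both summands are acyclic, hence so is $\FF^{top}_\bullet$.

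The main obstacle is the first phase: carefully tracking signs through the iterated lifting and recognizing that the accumulated monomials in the $b_{ij}$ reorganize into the stated pfaffians. Once the matrices are in pfaffian form, the remaining verifications are standard pfaffian algebra.
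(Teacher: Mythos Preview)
Your computation of the three differentials follows the same iterative lifting scheme as the paper (Propositions~\ref{splitn3} and~\ref{splitn1}), so that part coincides.

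For acyclicity your route is genuinely different from the paper's. The paper verifies the Buchsbaum--Eisenbud acyclicity criterion directly: it checks that $v^{(2)}_{top}$ has rank $n-1$ by looking at the maximal minors involving the last row and column, observes that the depth of the ideal of maximal minors is at least $2$, and then invokes the fact (cited from \cite{CVW18}) that three distinct submaximal pfaffians of a generic skew-symmetric matrix form a regular sequence, giving depth $\ge 3$ for the ideal of entries of $v^{(3)}_{top}$. Your argument instead performs the column operation $Q$ on $F_2$ to block-diagonalize $v^{(2)}_{top}$ and, using the pfaffian expansion $b_{*n}^T v=-P$, kills the last entry of $v^{(3)}_{top}$; since the last entry of $v^{(1)}_{top}$ is already zero, the complex splits as a contractible piece $0\to R\xrightarrow{1}R\to 0$ and the Buchsbaum--Eisenbud resolution of the Gorenstein height-$3$ pfaffian ideal of the generic odd-size matrix $B^{(n-1)}$. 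Your approach is more structural: it identifies $\FF^{top}_\bullet$ up to isomorphism with a classical acyclic complex, so acyclicity is inherited rather than re-verified, and it avoids the external citation for the regular-sequence property of pfaffians. The paper's approach, on the other hand, stays closer to the raw data and is the template it reuses for the non-split (Hilbert--Burch) case where such a clean splitting is not available. One small phrasing issue: $v^{(1)}_{top}$ is unchanged not ``because its last entry is zero'' but because your change of basis is on $F_2$, not $F_1$; the vanishing of that last entry is what then makes the \emph{splitting} go through.
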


\begin{proof} (of acyclicity)
It is easy to check that the composition of the differentials are zero. To show acyclicity one may observe that the maximal minors of  $ v^{(2)}_{top} $ involving the last row and column are equal to maximal minors of the skew-symmetric matrix on the variables $b_{ij}$ with $i,j \leq n-1$. This shows that the rank of $v^{(2)}_{top}$ is $n-1$ and that the depth of its ideal of maximal minors is at least 2. Since $n \geq 4$, there are at least three distinct submaximal pfaffians of $B$ among the entries of $v^{(3)}_{top}$. They form a regular sequence in $R$ by results in \cite{CVW18}. Therefore the complex is acyclic by the Buchsbaum-Eisenbud criterion \cite{BE73}.
\end{proof}

Let us start the computation of the maps $v^{(i)}_j$ introduced in Section 2. Observe that the expressions $G_{ij}$ introduced in (\ref{bvar}) are simply $G_{ij} = b_{ij} g$, $d(G_{ij}) = b_{ij} f_n$ (for this format we do not need the notation $b_{ij}^{k}$ since the rank of $F_3$ is one). Furthermore, for $i \leq n-1$, $d(e_i)=0$ and $d(f_i)=e_i$ while $d(e_n)=1$ and $d(f_n)=0$.

Using Leibniz rule and exactness of $\FF$ one can compute the maps $v^{(3)}_1$,$v^{(2)}_1$,$v^{(1)}_1$ corresponding to multiplicative structure as:
$$e_i^.e_j=  b_{ij} f_n, \mbox{ for } 1 \leq i < j \leq n-1, \, \, \, 
 e_i^.e_n = -f_i + b_{in} f_n. $$
$$e_i^.f_h=  -b_{ih}g, \mbox{ for } i \leq n-1, \, h \leq n-1, \, \, \, e_n^.f_h=  b_{hn}g, \mbox{ for } h \leq n-1, $$
$$e_i^.f_n=  0, \mbox{ for } i \leq n-1, \, \, \, e_n^.f_n=  g. $$
$$e_i^.e_j^.e_k =  0, \mbox{ for } 1 \leq i < j < k \leq n-1, \, \, 
 e_i^.e_j^.e_n = (-1)^{i+j+1}b_{ij}g. $$
 
 For this format the map $v^{(2)}_1: F_1 \otimes F_2 \to F_3$ is already equal to $v^{(2)}_{top}$.
For $s \geq 2$, the maps $v^{(3)}_s,v^{(1)}_s$ can be computed by lifting the Bruns cycles described in \cite[Section 11]{W18}.

 The map $v^{(3)}_2: \bigwedge^4 F_1 \to F_2 \otimes F_3$ is calculated as lifting of the cycle $q$ in the following complex:
 \begin{equation}
 \label{v3,2}
 \begin{matrix} 0&\rightarrow& \bigwedge^2F_3 &\rightarrow& F_3\otimes F_2 &\rightarrow  S_2 F_2 &\rightarrow S_2F_1 \\ &&&& \uparrow\ v^{(3)}_2 & \nearrow q  \\&&&&  \bigwedge^4F_1   \end{matrix}
 \end{equation}
 where we set $ q(e_i,e_j,e_k,e_r) = e_i^.e_j \otimes e_k^.e_r - e_i^.e_k \otimes e_j^.e_r + e_i^.e_r \otimes e_j^.e_k. $
 
 An easy computation shows that the image of $q$ is in the kernel of $d_2 \otimes d_2$, and therefore in  the image of $d_3 \otimes 1: F_3 \otimes F_2 \to S_2F_2$. Hence the image of $q$ can be lifted to $F_3 \otimes F_2$. This lifting is in general not unique, but in our case it is since $\bigwedge^2 F_3 = 0$ and $d_3 \otimes 1$ is injective.
 To compute this map concretely, we use the notation of Definition \ref{pfaffians}. 
  In the case $i,j,k,r \leq n-1$, we get
 $$ q(e_i,e_j,e_k,e_r) = b_{ij} f_n \otimes b_{kr} f_n - b_{ik} f_n \otimes b_{jr} f_n + b_{ir} f_n \otimes b_{jk} f_n  = P_{ijkr} (f_n \otimes f_n). $$ Using that $f_n=d(g)$ we could easily lift this to $ F_3 \otimes F_2 $ getting 
 $$ v_{2}^{(3)}(e_i,e_j,e_k,e_r) = P_{ijkr}(f_n \otimes g).  $$
 In the case $r=n$ some extra terms appear in the result:
 $$ q(e_i,e_j,e_k,e_n) = b_{ij} f_n \otimes (-f_k + b_{kn} f_n) - b_{ik} f_n \otimes (-f_j + b_{jn} f_n) +   b_{jk} f_n \otimes (-f_i + b_{in} f_n) = $$ 
 $$ = P_{ijkr} (f_n \otimes f_n) - b_{ij} \otimes (f_k \otimes f_n) + b_{ik} \otimes (f_j \otimes f_n) - b_{jk} \otimes (f_i \otimes f_n). $$
 Hence $$ v_{2}^{(3)}(e_i,e_j,e_k,e_n) = P_{ijkn}(f_n \otimes g) - b_{ij} \otimes (f_k \otimes g) + b_{ik} \otimes (f_j \otimes g) - b_{jk} \otimes (f_i \otimes g).  $$

  \begin{remark}
 When $n=4$ the top maps are $v^{(3)}_2$,$v^{(2)}_1$,$v^{(1)}_1$ and therefore the differentials of the complex $\FF^{top}_\bullet$ are represented by the matrices $$v^{(3)}_{top} =\bmatrix -b_{23}  \\ b_{13}  \\ -b_{12} \\ P \endbmatrix , 
v^{(2)}_{top} =\bmatrix 0 & -b_{12} &  -b_{13} & 0 \\  b_{12} & 0 & -b_{23} &  0 \\ b_{13} & b_{23} & 0 & 0  \\ b_{14} & b_{24} & b_{34} &  1  \\
 \endbmatrix , 
v^{(1)}_{top} =\bmatrix b_{23} & -b_{13} &  b_{12} & 0 \endbmatrix. $$ Here $P=b_{12}b_{34}-b_{13}b_{24}+b_{14}b_{23}$. One can easily show manually that the complex $\FF^{top}_\bullet$ having these differentials is acyclic.
 \end{remark}
 
For $n > 4$, the map $v^{(3)}_{top}$ can be computed in the following way.

\begin{prop}
\label{splitn3}
For $n=2s$ the map $v^{(3)}_{top} = v^{(3)}_{s}: \bigwedge^{n} F_1 \to S_{s-1} F_3 \otimes F_2  $ is obtained by lifting the image of the map $v^{(3)}_{1} \otimes v^{(3)}_{s-1}$. In particular
$$ v_{top}^{(3)}(e_{1},\ldots,e_{n}) = P(f_n \otimes g^{s-1}) + \sum_{i=1}^{n-1}(-1)^{i}P_{\hat{i},\hat{n}}(f_i \otimes g^{s-1}).  $$
\end{prop}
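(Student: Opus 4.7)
The plan is to prove the proposition by induction on $s$, with the base case $s=2$ already established in the preceding computation of $v^{(3)}_2$. The key point, just as in diagram \eqref{v3,2}, is to set up the correct lifting diagram for each $s$. Specifically, we work with the complex
\[
0 \to S_{s-1}F_3 \otimes F_2 \xrightarrow{d_3^{\otimes (s-1)} \otimes 1} S_{s-2}F_3 \otimes S_2 F_2 \to \cdots \to S_s F_1,
\]
and given the antisymmetrization $q_s : \bigwedge^{2s} F_1 \to S_s F_2$ built inductively from $v^{(3)}_1 \otimes v^{(3)}_{s-1}$ (the natural Bruns cycle), we need to lift it through $d_3^{\otimes(s-1)} \otimes 1$. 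Because $d_3$ is the injection $g \mapsto f_n$ in our split setting, the map $d_3^{\otimes(s-1)} \otimes 1$ is injective on the relevant summand, and $\bigwedge^2 F_3 = 0$, so the lifting is unique.

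First I would verify that the alternated image of $v^{(3)}_1 \otimes v^{(3)}_{s-1}$ applied to $e_1 \wedge \cdots \wedge e_n$ actually lies in the image of $d_3^{\otimes (s-1)} \otimes 1$; this follows automatically because by induction each summand is divisible by $f_n \otimes g^{s-2}$ or by products of $f_n$ that can be re-expressed as $d_3$ applied to powers of $g$. Then I would compute the image explicitly using the inductive formula for $v^{(3)}_{s-1}$: the product $v^{(3)}_1(e_\alpha, e_\beta) \cdot v^{(3)}_{s-1}(\hat{e}_\alpha, \hat{e}_\beta)$ contributes either a term $b_{\alpha\beta} f_n \otimes (\text{pfaffian})\, f_? \otimes g^{s-2}$ when both indices are less than $n$, or extra $(-f_\alpha + b_{\alpha n} f_n)$ contributions when one index equals $n$.

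Next I would collect these contributions by expanding $q_s(e_1,\ldots,e_n) \in S_{s-2}F_3 \otimes S_2 F_2$ along the last variable, separating terms containing a factor of $f_n$ (which will lift uniquely to a coefficient of $f_n \otimes g^{s-1}$) from cross terms $f_i \otimes f_n$ with $i < n$ (which lift to coefficients of $f_i \otimes g^{s-1}$). The combinatorial heart is the Laplace-type pfaffian expansion
\[
P = P_{1,2,\ldots,n} = \sum_{j \neq n} (-1)^{j+1} b_{jn}\, P_{\hat{j},\hat{n}},
\]
together with its variants obtained by fixing one index: the coefficient of $f_n \otimes g^{s-1}$ aggregates into $P$, while for each $i<n$ the coefficient of $f_i \otimes g^{s-1}$ aggregates into $\pm P_{\hat{i},\hat{n}}$, via the recursive pfaffian identity applied one step at a time through the induction.

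The main obstacle will be tracking signs carefully: both the alternation in $\bigwedge^{2s} F_1$ and the pfaffian expansion signs need to combine correctly, and one must check the sign $(-1)^i$ produced matches the sign in the statement. I would handle this by organizing the induction so that the inductive hypothesis gives $v^{(3)}_{s-1}$ on any subset of $2s-2$ indices already in the claimed pfaffian form, and then verify a single sign-compatibility identity at each induction step, rather than redoing the whole antisymmetrization. Acyclicity of $\mathbb{F}^{top}_\bullet$ is then not needed here, since it was already established in Theorem \ref{thsplitn} by the Buchsbaum--Eisenbud criterion.
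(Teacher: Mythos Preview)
Your proposal is correct and follows the same overall strategy as the paper: induction on $s$ (equivalently on $n=2s$), setting up the lifting diagram from $S_{s-2}F_3\otimes S_2F_2$ to $S_{s-1}F_3\otimes F_2$, noting uniqueness of the lift since $\bigwedge^2 F_3=0$, and invoking the Laplace expansion $P=\sum_{i<n}(-1)^{i+1}b_{in}P_{\hat i,\hat n}$ to identify coefficients.

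There is one organizational difference worth noting. You propose to antisymmetrize over \emph{all} pairs $(e_\alpha,e_\beta)$ and then sort the resulting terms according to whether or not $n\in\{\alpha,\beta\}$; this forces you to use the inductive hypothesis for $v^{(3)}_{s-1}$ on subsets that \emph{contain} $e_n$, where the formula has nontrivial $f_k$-components for $k<n$, and is exactly the source of the sign bookkeeping you flag as the main obstacle. The paper instead writes
\[
q(e_1,\ldots,e_n)=\sum_{i=1}^{n-1}(-1)^{i+1}\,e_i^{\,.}e_n\otimes v^{(3)}_{s-1}(\hat e_i,\hat e_n),
\]
i.e.\ it expands along the distinguished index $n$ so that $e_n$ always sits in the $v^{(3)}_1$ factor. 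The payoff is that each $v^{(3)}_{s-1}(\hat e_i,\hat e_n)$ is now evaluated on indices all strictly below $n$, where $e_j^{\,.}e_k=b_{jk}f_n$ for every pair; iterating this immediately gives $v^{(3)}_{s-1}(\hat e_i,\hat e_n)=P_{\hat i,\hat n}\,(f_n\otimes g^{s-2})$ with \emph{only} an $f_n$-component. The coefficients of $f_n\otimes f_n$ and of $f_i\otimes f_n$ can then be read off directly without any cancellation. Your route reaches the same answer, but the paper's expansion along $e_n$ eliminates precisely the sign-tracking you anticipated.
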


\begin{proof}
As we did in the case $s=2$, we write $$q: \bigwedge^n F_1 \to \bigwedge^2 F_1 \otimes \bigwedge^{n-2} F_1 \buildrel{ v^{(3)}_{s-1} \otimes v^{(3)}_{1}}\over\longrightarrow S_{s-2} F_3 \otimes F_2 \otimes F_2. $$
Explicitly 
$$q(e_1, \ldots, e_n)= \sum_{i=1}^{n-1} (-1)^{i+1} e_i^.e_n \otimes v_{s-1}^{(3)}(\hat{e_i},\hat{e_n}).   $$
We can work by induction on $n$ since we already computed $v^{(3)}_{1}$ and $v^{(3)}_{2}$.
We know that $e_i^.e_n = -f_i + b_{in}f_n$ and,
looking at those maps and computing the next ones iteratively, is easy to see that the term 
$v_{s-1}^{(3)}(\hat{e_i},\hat{e_n})= P_{\hat{i},\hat{n}}(f_n \otimes g^{s-2})$.  
It follows that the component of $ q(e_1, \ldots, e_n) $ with respect to $f_n \otimes f_n \otimes g^{s-2}$ is given by $$ \sum_{i=1}^{n-1} (-1)^{i+1} b_{in}  P_{\hat{i},\hat{n}} = P.  $$
For $i \neq n$, the component of $ q(e_1, \ldots, e_n) $ with respect to $f_i \otimes f_n \otimes g^{s-2}$ is given by 
$  (-1)^{i}P_{\hat{i},\hat{n}}. $
Using the fact that $d(g)=f_n$ and lifting $q$ to $ S_{s-1} F_3 \otimes F_2 $ we get the desired result.
\end{proof}
 
 Knowing the maps associated to graded components of $W(d_3)$ allows to compute easily those associated to $W(d_1)$.

 \begin{prop}
 \label{splitn1}
For $n=2s$ the map $v^{(1)}_{top} = v^{(1)}_{s-1}: \bigwedge^{n-1} F_1 \to S_{s-1} F_3  $ is obtained by lifting the image of the map $v^{(1)}_{0} \otimes v^{(3)}_{s-1}$. For $i \neq n$,
$ v_{top}^{(1)}(\hat{e_{i}}) = (-1)^{i+1}P_{\hat{i}, \hat{n}}g^{s-1}, \mbox{ while }  v_{top}^{(1)}(\hat{e_{n}}) = 0. $
\end{prop}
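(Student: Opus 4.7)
The plan is to follow the same strategy used to prove Proposition \ref{splitn3}, now for the critical representation $W(d_1)$. Concretely, I would realize $v^{(1)}_{top} = v^{(1)}_{s-1} : \bigwedge^{n-1} F_1 \to S_{s-1} F_3$ as a lift of
$$ q : \bigwedge^{n-1} F_1 \xrightarrow{\Delta} F_1 \otimes \bigwedge^{n-2} F_1 \xrightarrow{v^{(1)}_0 \otimes v^{(3)}_{s-1}} R_0 \otimes S_{s-2} F_3 \otimes F_2 \cong S_{s-2} F_3 \otimes F_2 $$
along the differential $d : S_{s-1} F_3 \to S_{s-2} F_3 \otimes F_2$ induced from $d_3$ by the Leibniz rule, which in our rank-one case is simply $g^{s-1} \mapsto g^{s-2} \otimes f_n$. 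The existence of the lift is guaranteed exactly as for the diagram (\ref{v3,2}); its uniqueness follows from the injectivity of $d$, since $\bigwedge^2 F_3 = 0$.

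Next I would exploit the split structure: since $v^{(1)}_0 = d_1$ is the projection onto $e_n^*$, we have $v^{(1)}_0(e_j) = \delta_{jn}$. Applying this to the standard comultiplication
$$ \Delta(\hat{e_i}) = \sum_{j \ne i} (\pm)\, e_j \otimes (\hat{e_i},\hat{e_j}), $$
only the $j = n$ summand survives after composition with $v^{(1)}_0 \otimes v^{(3)}_{s-1}$. When $i = n$, the vector $e_n$ does not appear in $\hat{e_n} = e_1 \wedge \cdots \wedge e_{n-1}$, so $q(\hat{e_n}) = 0$ and consequently $v^{(1)}_{top}(\hat{e_n}) = 0$. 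When $i \ne n$, the surviving contribution is
$$ q(\hat{e_i}) = (-1)^{i+1}\, v^{(3)}_{s-1}(\hat{e_i}, \hat{e_n}), $$
where the sign is the Koszul shuffle sign for extracting $e_n$ from $\hat{e_i}$; it is the very same factor $(-1)^{i+1}$ appearing in the proof of Proposition \ref{splitn3}, the agreement being a consequence of $n = 2s$ being even.

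Finally, I would invoke the inductive identity $v^{(3)}_{s-1}(\hat{e_i}, \hat{e_n}) = P_{\hat{i},\hat{n}}(f_n \otimes g^{s-2})$ that was already computed in the proof of Proposition \ref{splitn3}. Since $f_n = d(g)$, the element $P_{\hat{i},\hat{n}}(f_n \otimes g^{s-2}) \in S_{s-2} F_3 \otimes F_2$ lifts along $d$ to $P_{\hat{i},\hat{n}}\, g^{s-1} \in S_{s-1} F_3$, yielding $v^{(1)}_{top}(\hat{e_i}) = (-1)^{i+1} P_{\hat{i},\hat{n}}\, g^{s-1}$ exactly as claimed.

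The only nontrivial bookkeeping is the sign tracking in the Koszul comultiplication, which for $n$ even produces precisely the factor $(-1)^{i+1}$ required for the Pfaffian expansion along the last row; all the substantive combinatorial content and the inductive computation of $v^{(3)}_{s-1}$ have been carried out already in Proposition \ref{splitn3}, so essentially no new calculation beyond the sign check is needed.
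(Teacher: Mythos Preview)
Your proposal is correct and follows essentially the same approach as the paper: both define $q$ via the comultiplication composed with $v^{(1)}_0 \otimes v^{(3)}_{s-1}$, use that $d_1(e_j)=\delta_{jn}$ to reduce to the single surviving term involving $v^{(3)}_{s-1}(\hat{e_i},\hat{e_n})=P_{\hat{i},\hat{n}}(f_n\otimes g^{s-2})$ from Proposition~\ref{splitn3}, and then lift via $f_n=d(g)$. The paper's argument is in fact terser than yours, omitting the remarks on existence and uniqueness of the lift.
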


\begin{proof}
Set $$q: \bigwedge^{n-1} F_1 \to  F_1 \otimes \bigwedge^{n-2} F_1 \buildrel{ v^{(1)}_{0} \otimes v^{(3)}_{s-1}}\over\longrightarrow S_{s-2} F_3 \otimes F_2. $$ Explicitly 
$$ q(\hat{e_{i}}) = \sum_{j \neq i}(-1)^{i+j}d(e_j)v^{(3)}_{s-1}(\hat{e_i},\hat{e_j}) . $$ The result follows now from the fact that $d(e_n)=1$ and $d(e_j)=0$ for every $j \neq n$. 
\end{proof}

Combining all the preceding results we get the description of the differential of the complex $\FF^{top}_\bullet$ of Theorem \ref{thsplitn}.

 
 
 

\subsection{Direct sum of a generic Hilbert-Burch complex and an identity map}

Here we compute the differentials of the complex presented in Theorem \ref{thhbn}.
All the computation of maps $v^{(i)}_j$ is done following the same method used for the split exact case of the same format. Since this case is more complicated, we skip several parts of the long computation and only highlight the key details. 

Consider the same setting used in Section 1.1 to state Theorem \ref{thhbn}. In particular we let $v_0^{(3)},v_0^{(2)},v_0^{(1)}$ be the differentials of the complex (\ref{hilb1nn1}).


To proceed with the computation we need to use some standard relation involving minors of the generic $n \times n-1$ matrix $X$. 

\begin{lem}
\label{minorlemma}
For $1 \leq i < j \leq n$, the result of the multiplication map $v^{(3)}_{1}$ is obtained as
\begin{equation}
\label{eqmult}
e_i^.e_j=  \sum_{k=1}^{n-1}(-1)^{i+j+k} X_{\hat{k}}^{\hat{i},\hat{j}} f_k + b_{ij}f_n.
\end{equation}
\end{lem}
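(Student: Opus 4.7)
My plan is to apply the Leibniz-rule characterization of the multiplicative structure and reduce the lemma to a sign-tracked cofactor calculation. By the construction described in Section~\ref{sec:basic}, $e_i^.e_j \in F_2$ is characterized by
$$d_2(e_i^.e_j) = d_1(e_i)\, e_j - d_1(e_j)\, e_i = Y_i e_j - Y_j e_i,$$
modulo $\ker d_2$. Since $d_3 = (0,\dots,0,1)^T$ sends $g_1 \mapsto f_n$, exactness of $\FF$ gives $\ker d_2 = R\cdot f_n$, so the ambiguity in $e_i^.e_j$ is exactly a scalar multiple of $f_n$; the generic element of this kernel, in the defect-variable notation~(\ref{bvar}), is $d(G_{ij}) = b_{ij} f_n$. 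It therefore suffices to exhibit one particular lift $\beta = \sum_{k=1}^{n-1}(-1)^{i+j+k} X_{\hat k}^{\hat i,\hat j} f_k$ satisfying $d_2(\beta) = Y_i e_j - Y_j e_i$, and then the formula in the lemma is $\beta + b_{ij} f_n$ by construction.

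Applying $d_2$ to $\beta$, the coefficient of $e_l$ in $d_2(\beta)$ is $\sum_{k=1}^{n-1} (-1)^{i+j+k} X_{lk}\, X_{\hat k}^{\hat i,\hat j}$, and the verification breaks into three cases. For $l=j$, cofactor expansion of the $(n-1)\times(n-1)$ matrix $X^{\hat i}$ along its row originally indexed $j$ (which, since $i<j$, sits in position $j-1$ of $X^{\hat i}$) gives $\det(X^{\hat i}) = \sum_k (-1)^{(j-1)+k} X_{jk}\, X_{\hat k}^{\hat i,\hat j}$, and combining with $Y_i = (-1)^{i+1}\det(X^{\hat i})$ yields coefficient $Y_i$. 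For $l=i$, row $i$ remains in position $i$ after deleting row $j$, so $\det(X^{\hat j}) = \sum_k (-1)^{i+k} X_{ik}\, X_{\hat k}^{\hat i,\hat j}$, and the coefficient works out to $(-1)^j \det(X^{\hat j}) = -Y_j$. For $l \notin \{i,j\}$, the sum is, up to an overall sign, the Laplace expansion along an adjoined copy of row $l$ of the $(n-1)\times(n-1)$ matrix whose remaining rows are those of $X^{\hat i,\hat j}$; since row $l$ already appears among these rows, the matrix has two equal rows and the determinant is zero.

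The one genuine obstacle is sign bookkeeping: the asymmetry that deleting row $i$ shifts the index $j$ to $j-1$ while deleting row $j$ leaves the index $i$ unchanged is precisely what produces the asymmetric outputs $+Y_i$ on $e_j$ and $-Y_j$ on $e_i$, and one must check that these signs combine correctly with the chosen prefactor $(-1)^{i+j+k}$ in the definition of $\beta$. Once this is tracked carefully, all three identities reduce to instances of the cofactor expansion formula, and the lemma follows.
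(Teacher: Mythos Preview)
Your proof is correct and follows essentially the same approach as the paper's: both set $\beta=\alpha_{ij}=\sum_{k}(-1)^{i+j+k}X_{\hat k}^{\hat i,\hat j}f_k$, compute $d_2(\beta)$ as a cofactor sum in the $e_h$ basis, and observe that the coefficient is $Y_i$ for $h=j$, $-Y_j$ for $h=i$, and zero otherwise. Your write-up is more explicit about the sign tracking and the role of the defect term $b_{ij}f_n$ coming from $\ker d_2$, which the paper leaves implicit.
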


\begin{proof}
Set $\alpha_{ij}= \sum_{k=1}^{n-1}(-1)^{i+j+k} X_{\hat{k}}^{\hat{i},\hat{j}} f_k $. We need to show that $d_2(\alpha_j) = d(e_i)e_j-d(e_j)e_i = Y_i e_j - Y_j e_i$. Using that $d(f_k)= \sum_{h=1}^n X_{hk} e_h$ we get
$$ d_2(\alpha_{_{ij}}) = \sum_{k=1}^{n-1}(-1)^{i+j+k} X_{\hat{k}}^{\hat{i},\hat{j}} \sum_{h=1}^n X_{hk} e_h = 
(-1)^{i+j}\sum_{h=1}^n \left( \sum_{k=1}^{n-1}(-1)^{k} X_{\hat{k}}^{\hat{i},\hat{j}}X_{hk} \right) e_h $$
Now, if $h=i$ (or $h=j$) the coefficient inside parenthesis is $\pm Y_j$ (or $\mp Y_i$ ), otherwise it is zero. 
\end{proof}

 Keeping the notation $e_i^.e_j= \alpha_{ij} + b_{ij}f_n$ introduced in the previous proof, after a similar computation involving relations between minors, we have the following lemma.
 
 \begin{lem}
 \label{relat1}  For every $1 \leq i \leq n $ and $1 \leq h \leq n-1$, the following relation holds:
\begin{equation} \label{rel1}
Y_i f_h - \sum_{j=1}^n X_{jh} \alpha_{ij} = 0
\end{equation}
\end{lem}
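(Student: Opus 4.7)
The plan is to exploit the acyclicity of $\FF$ rather than carry out a direct Laplace-expansion of minors. Since $d_3$ sends $g_1$ to $f_n$, exactness of the complex (\ref{hilb1nn1}) yields $\ker d_2 = R_0 \cdot f_n$. So it suffices to check that the two sides of the asserted equation have the same image under $d_2$ and that both lie in the $R_0$-span of $f_1,\ldots,f_{n-1}$.

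To verify the first point, I would apply $d_2$ to the right-hand side. Lemma \ref{minorlemma} (extended antisymmetrically, with the convention $\alpha_{ii}=0$) gives $d_2(\alpha_{ij})=Y_i e_j - Y_j e_i$ for all $i,j$, so
\[
d_2\!\left(\sum_{j=1}^n X_{jh}\,\alpha_{ij}\right) \;=\; Y_i \sum_{j=1}^n X_{jh}\,e_j \;-\; \Bigl(\sum_{j=1}^n X_{jh}\,Y_j\Bigr)\,e_i.
\]
The parenthesized coefficient of $e_i$ is the $h$-th entry of $d_1\circ d_2$, which vanishes because $\FF$ is a complex. What remains is $Y_i\,d_2(f_h) = d_2(Y_i f_h)$, matching $d_2$ of the left-hand side.

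The second point is immediate: $Y_i f_h$ contains no $f_n$-component since $h\leq n-1$, and each $\alpha_{ij}$ is by its defining formula a linear combination of $f_1,\ldots,f_{n-1}$ only. Hence the difference $Y_i f_h - \sum_j X_{jh}\alpha_{ij}$ lies in $R_0\cdot f_n \,\cap\, \mathrm{span}(f_1,\ldots,f_{n-1}) = 0$, proving the identity.

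I do not expect any serious obstacle: the only ingredients beyond Lemma \ref{minorlemma} are the complex condition $d_1\circ d_2=0$ and the explicit form of $\ker d_2$, both immediate from the Hilbert-Burch setup. A direct combinatorial proof by a two-row Laplace expansion of the minors $X_{\hat{k}}^{\hat{i},\hat{j}}$ would also work but would require delicate sign bookkeeping that this argument avoids.
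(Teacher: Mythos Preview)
Your argument is correct. The only point worth flagging is the phrase ``extended antisymmetrically'': the explicit formula in (\ref{eqmult}) is stated for $i<j$, and the paper's convention $e_i^.e_j=\alpha_{ij}+b_{ij}f_n$ together with the skew-symmetry of both $e_i^.e_j$ and $b_{ij}$ forces $\alpha_{ji}=-\alpha_{ij}$. With that reading your claim $d_2(\alpha_{ij})=Y_ie_j-Y_je_i$ holds for all $i,j$, and the rest goes through cleanly.

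This is a genuinely different route from the paper's. The paper simply records that the identity follows ``after a similar computation involving relations between minors'', i.e.\ a second Laplace-type expansion parallel to the proof of Lemma~\ref{minorlemma}, checking that the $f_k$-coefficients agree. You instead recycle Lemma~\ref{minorlemma} and the acyclicity of $\FF$: once $d_2$ of the difference is shown to vanish and the difference is seen to have no $f_n$-component, the identification $\ker d_2 = R_0 f_n$ finishes the job. Your approach trades the bookkeeping of another cofactor expansion for one structural observation about the complex, and in fact the same trick (difference lies in $\ker d_2$ but misses $\operatorname{im} d_3$) is exactly how the paper handles the analogous relation~(\ref{rel2}) in Lemma~\ref{relbis}.
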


This allow us to compute the multiplication map $v^{(2)}_{top} = v^{(2)}_{1}$. 

\begin{prop}
 \label{hbn2}
The multiplication $v^{(2)}_{1}$ is computed as
$$ e_i^.f_h = \left\{ \begin{array}{cc}  -\sum_{j=1}^n X_{jh}b_{ji} g    &\mbox{if } h \neq n\\
                                 Y_ig            &\mbox{if } h=n  \\
\end{array}\right.  $$
\end{prop}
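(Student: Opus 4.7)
The plan is to invoke the graded Leibniz rule for the multiplicative structure on $\FF$ and then reduce the resulting expression using the two preceding lemmas. A key observation that simplifies the whole computation is that $F_3$ has rank one and $d_3(g)=f_n$, so $d_3$ is injective and the element $e_i^{\,.}f_h\in F_3$ is uniquely determined by its image under $d_3$; no defect-variable freedom enters here (contrast with the computation of $e_i^{\,.}e_j$).

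For $h=n$ the argument is immediate. The last column of $d_2$ is zero, so $d(f_n)=0$, and Leibniz gives $d(e_i^{\,.}f_n)=Y_i f_n=d(Y_i g)$; by injectivity of $d_3$ we conclude $e_i^{\,.}f_n=Y_i g$.

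For $h\leq n-1$ the plan is to substitute $d(f_h)=\sum_j X_{jh}e_j$ into the Leibniz expansion of $d(e_i^{\,.}f_h)$, producing a $Y_i f_h$ term from $d(e_i)$ plus a linear combination of the products $e_i^{\,.}e_j$. I will then apply Lemma \ref{minorlemma} to each $e_i^{\,.}e_j$, extended by skew-symmetry for $j<i$ (recall that the explicit formula for $\alpha_{ij}$ is symmetric in $i,j$ while $e_i^{\,.}e_j$ is skew), splitting the sum into an $\alpha$-part and a $b$-part. The $\alpha$-contribution is exactly $\sum_j X_{jh}\alpha_{ij}$, which Lemma \ref{relat1} identifies with $Y_i f_h$, so it cancels the $Y_i f_h$ produced by $d(e_i)$. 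What remains is a scalar multiple of $f_n$ involving the defect variables $b_{ij}$; lifting along $d_3(g)=f_n$ yields the formula $-\sum_j X_{jh}b_{ji}\,g$.

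The main obstacle is nothing conceptual, only sign bookkeeping: one must consistently apply the graded Leibniz convention used earlier in the paper to compute $d$ on products in different bidegrees, and track carefully the skew-symmetric extension of $\alpha_{ij}$. Once those signs are reconciled with those in Lemma \ref{relat1}, the verification is purely mechanical and the two cases fit together into the piecewise formula of the proposition.
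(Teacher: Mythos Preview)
Your proposal is correct and follows essentially the same approach as the paper: compute $d(e_i^{\,.}f_h)$ via Leibniz, expand $e_i^{\,.}e_j=\alpha_{ij}+b_{ij}f_n$ via Lemma~\ref{minorlemma}, cancel the $\alpha$-part against $Y_if_h$ using Lemma~\ref{relat1}, and lift the remaining multiple of $f_n$ along $d_3$. Your explicit observation that $d_3$ is injective (so no new defect variables appear) is a welcome clarification that the paper leaves implicit.
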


\begin{proof}
If $h \neq n$, we have 
$$ d(e_i^.f_h) = Y_if_h - e_i\sum_{j=1}^n X_{jh} e_j = Y_if_h - \sum_{j=1}^n X_{jh} (\alpha_{ij} + b_{ij} f_n ). $$
Using Lemma \ref{relat1}, this is equal to $-\sum_{j=1}^n X_{jh}b_{ji}f_n$ and can be lifted to $-\sum_{j=1}^n X_{jh}b_{ji}g$.
If $h=n$, $ d(e_i^.f_n) = Y_if_n  $ and therefore $ e_i^.f_n = Y_ig.  $
\end{proof}

We compute inductively the maps corresponding to the top graded component of $W(d_3)$ and $W(d_1)$ as  in Propositions \ref{splitn3} and \ref{splitn1}. We use the same notation of Definition \ref{pfaffians} to denote pfaffians of the generic skew-symmetric matrix in the variables $b_{ij}$.

\begin{prop}
\label{hbn3}
For $n=2s$ the map
$$ v_{top}^{(3)}(e_{1},\ldots,e_{n}) = P(f_n \otimes g^{s-1}) + \sum_{k=1}^{n-1}(-1)^{k}\left( \sum_{1 \leq i< j \leq n} (-1)^{i+j} X_{\hat{k}}^{\hat{i},\hat{j}} P_{\hat{i},\hat{j}} \right) (f_k \otimes g^{s-1}).  $$
\end{prop}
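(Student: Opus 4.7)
The plan is to argue by induction on $s$, following the template of Proposition \ref{splitn3} but now using the full multiplication formula from Lemma \ref{minorlemma}. The base case $s=2$ (i.e.\ $n=4$) amounts to a direct lift of the cycle in diagram (\ref{v3,2}), but with the $D_n$-type multiplication $e_i^{.}e_j = \alpha_{ij} + b_{ij}f_n$ where $\alpha_{ij} = \sum_{k=1}^{n-1}(-1)^{i+j+k}X_{\hat{k}}^{\hat{i},\hat{j}}f_k$ replacing the simpler formula used in the split case. For the inductive step I would introduce the cycle
\[
q : \bigwedge^n F_1 \longrightarrow \bigwedge^2 F_1 \otimes \bigwedge^{n-2} F_1 \xrightarrow{\ v^{(3)}_1 \otimes v^{(3)}_{s-1}\ } F_2 \otimes S_{s-2}F_3 \otimes F_2
\]
and write
\[
q(e_1,\ldots,e_n) = \sum_{1 \le i<j \le n} (-1)^{i+j+1}(e_i^{.}e_j)\otimes v^{(3)}_{s-1}(\hat{e_i},\hat{e_j}),
\]
and then obtain $v^{(3)}_{top}=v^{(3)}_{s}$ by lifting $q$ through $\mathrm{id}\otimes d \otimes \mathrm{id}$.

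Next I would isolate the components of $q(e_1,\ldots,e_n)$ whose middle tensor slot is $f_n$, since $f_n = d(g)$ forces exactly these components to lift to $g$-components of $v^{(3)}_{top}$, while the injectivity of $d$ on the rank-one module $F_3$ makes the lift unique. Substituting $e_i^{.}e_j = \alpha_{ij}+b_{ij}f_n$ and the inductive expression for $v^{(3)}_{s-1}(\hat{e_i},\hat{e_j})$, the $f_n \otimes f_n \otimes g^{s-2}$ coefficient appears as a Pfaffian expansion in the $b_{ij}$'s and collapses to $P$; the $f_k \otimes f_n \otimes g^{s-2}$ coefficient (for $k<n$) arises from pairing an $\alpha_{ij}$ (carrying $X_{\hat{k}}^{\hat{i},\hat{j}}$) with an $f_n$-coefficient of $v^{(3)}_{s-1}(\hat{e_i},\hat{e_j})$ (carrying $P_{\hat{i},\hat{j}}$ inductively), and after collecting signs produces the claimed expression $(-1)^k\sum_{i<j}(-1)^{i+j}X_{\hat{k}}^{\hat{i},\hat{j}}P_{\hat{i},\hat{j}}$.

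The main obstacle is to verify that the remaining cross terms $f_a \otimes f_b \otimes g^{s-2}$ with $a,b<n$, which come from pairing two $\alpha$'s, vanish in $q(e_1,\ldots,e_n)$; otherwise $q$ would fail to lie in the image of $\mathrm{id}\otimes d \otimes \mathrm{id}$ and the lift to $F_2 \otimes S_{s-1}F_3$ would be obstructed. This cancellation is the multi-index generalization of the identity $Y_i f_h - \sum_{j=1}^n X_{jh}\alpha_{ij}=0$ of Lemma \ref{relat1} and should follow from Laplace (Pl\"ucker-type) expansions among the maximal minors of the generic matrix $X$, combined with the Pfaffian comultiplication identities for $B$. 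Alternatively, the vanishing is automatic from the existence of $\FF^{top}_\bullet$ recalled in Remark \ref{rmk:slice}: our computation, by construction, represents the top nonzero graded component of the critical representation $W(d_3)$, so the cross terms must already be absent. Once this step is settled, reading off the $f_n$-components of $q$ yields the displayed formula for $v^{(3)}_{top}(e_1,\ldots,e_n)$.
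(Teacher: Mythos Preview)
Your overall strategy---induction on $s$, lifting the cycle $q$ built from $v^{(3)}_1 \otimes v^{(3)}_{s-1}$---matches the paper's, but there are two concrete points of divergence worth flagging.

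\textbf{Choice of expansion.} The paper does \emph{not} use the full comultiplication $\sum_{i<j}(-1)^{i+j+1}(e_i^.e_j)\otimes v^{(3)}_{s-1}(\hat{e_i},\hat{e_j})$. Instead it fixes the index $n$ and writes
\[
q(e_1,\ldots,e_n)=\sum_{i=1}^{n-1}(-1)^{i+1}\,e_i^.e_n\otimes v^{(3)}_{s-1}(\hat{e_i},\hat{e_n}),
\]
exactly as in Proposition~\ref{splitn3}. This buys two things. First, the inductive hypothesis is only needed for $v^{(3)}_{s-1}(\hat{e_i},\hat{e_n})$, whose $f_n$-coefficient is $P_{\hat{i},\hat{n}}$; your version would require knowing $v^{(3)}_{s-1}(\hat{e_i},\hat{e_j})$ for all $i<j$, and the proposition as stated does not directly supply those values. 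Second, with the asymmetric expansion the $f_n\otimes f_n\otimes g^{s-2}$ coefficient is $\sum_{i<n}(-1)^{i+1}b_{in}P_{\hat{i},\hat{n}}$, which is exactly the Laplace expansion of $P$ along the last row. Your full sum $\sum_{i<j}(\pm)b_{ij}P_{\hat{i},\hat{j}}$ does not ``collapse to $P$''; already for $n=4$ it gives $2P$, so a normalisation would be needed and you should check it is uniform across all $f_k$-components.

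\textbf{Vanishing of the cross terms.} The paper does not invoke Pl\"ucker relations or the existence of $\FF^{top}_\bullet$. It writes $v^{(3)}_{s-1}(\hat{e_i},\hat{e_n})=\delta_{\hat{i},\hat{n}}+\beta_{\hat{i},\hat{n}}(g^{s-2}\otimes f_n)$, observes that $q$ lands in $\ker(1\otimes d_2\otimes d_2)$ because it is a Bruns cycle, and then notes that for $h,k<n$ the image $g^{s-2}\otimes d_2(f_h)\otimes d_2(f_k)$ is nonzero (since $d_2$ is injective on $\langle f_1,\ldots,f_{n-1}\rangle$). Hence the coefficient of every $f_h\otimes f_k\otimes g^{s-2}$ with $h,k<n$ is forced to vanish, i.e.\ $\sum_i(-1)^{i+1}\alpha_{in}\otimes\delta_{\hat{i},\hat{n}}=0$. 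This is a one-line argument once the kernel condition is in hand, and it avoids any explicit minor or Pfaffian identities.
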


\begin{proof}
Similarly to the case of  the split exact complex, we compute $v^{(3)}_{top} = v^{(3)}_{s}: \bigwedge^{n} F_1 \to S_{s-1} F_3 \otimes F_2  $ as lifting of the map 
$q: \bigwedge^n F_1 \to S_{s-2} F_3 \otimes S_2F_2. $
Consider the exact complex 
\begin{equation}
 0 \longrightarrow S_{s-1} F_3 \otimes F_2  \buildrel{d_3 \otimes 1}\over\longrightarrow  S_{s-2} F_3 \otimes S_2F_2 \buildrel{1 \otimes d_2 \otimes d_2}\over\longrightarrow S_{s-2} F_3 \otimes S_2F_1 
\end{equation}
It is clear by the computation of Bruns cycles \cite{W18} that the image of $q$ is contained in the kernel of the map $1 \otimes d_2 \otimes d_2$ and therefore in the image of $d_3 \otimes 1$. 

Working by induction on $n$, for distinct $i,j$ consider the term $v_{s-1}^{(3)}(\hat{e_i},\hat{e_j}) \in S_{s-2} F_3 \otimes F_2$. This term can be expressed as $\delta_{\hat{i},\hat{j}} + \beta_{\hat{i},\hat{j}}(g^{s-2} \otimes f_n)$ where $\delta_{\hat{i},\hat{j}}$ is the sum of all the components with respect to vectors $(g^{s-1} \otimes f_h)$ for $h \neq n.$ 

Let us write 
$$q(e_1, \ldots, e_n)= \sum_{i=1}^{n-1} (-1)^{i+1} e_i^.e_n \otimes v_{s-1}^{(3)}(\hat{e_i},\hat{e_n}) =  \sum_{i=1}^{n-1} (-1)^{i+1} (\alpha_{in} + b_{in}f_n) \otimes (\delta_{\hat{i},\hat{n}} + \beta_{\hat{i},\hat{n}}(g^{s-2} \otimes f_n)). $$
Since for $h,k \neq n$, $g^{s-2} \otimes d(f_h) \otimes d(f_k)$ is not zero, but the the image of $q$ is contained in the kernel of $1 \otimes d_2 \otimes d_2$, we must have that the components with respect to all those vectors are zero, and hence 
$\sum_{i=1}^{n-1} (-1)^{i+1} \alpha_{in}  \otimes \delta_{\hat{i},\hat{n}} = 0$.
It follows that 
$$q(e_1, \ldots, e_n)=  \sum_{i=1}^{n-1} (-1)^{i+1} \alpha_{in} \otimes \beta_{\hat{i},\hat{n}}(g^{s-2} \otimes f_n) + b_{in}f_n \otimes \delta_{\hat{i},\hat{n}} + b_{in}f_n \otimes \beta_{\hat{i},\hat{n}}(g^{s-2} \otimes f_n)). $$

Now, the component of $ q$ with respect to $ f_n \otimes f_n \otimes g^{s-2} $ is  $ \sum_{i=1}^{n-1} (-1)^{i+1} b_{in}  \beta_{\hat{i},\hat{n}}.  $ By induction, this implies that $\beta_{\hat{i},\hat{n}} = P_{\hat{i},\hat{n}}$ and the component of $q$ along $ f_n \otimes f_n \otimes g^{s-2} $ is equal to $P$.

Instead, for $k \neq n$, the component of $ q $ with respect to $f_k \otimes f_n \otimes g^{s-2}$ is computed inductively to be 
$$  \sum_{1 \leq i<j \leq n} (-1)^{i+j+k} X_{\hat{k}}^{\hat{i},\hat{j}} P_{\hat{i},\hat{j}}.   $$
The thesis is now obtained by lifting $q$ to $ S_{s-1} F_3 \otimes F_2 $.
\end{proof}

 

 \begin{prop}
 \label{hbn1}
For $n=2s$ the map
$ v_{top}^{(1)}(\hat{e_{i}}) = \sum_{j \neq i}(-1)^{i+j}Y_j P_{\hat{i},\hat{j}}g^{s-1}. $
\end{prop}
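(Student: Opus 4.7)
The plan is to mirror the argument of Proposition \ref{splitn1}, now accounting for the fact that $d(e_j) = Y_j$ is nonzero for every $j$ (rather than only for $j=n$, as in the split case). I would set up the composition
$$q \colon \bigwedge^{n-1} F_1 \longrightarrow F_1 \otimes \bigwedge^{n-2} F_1 \buildrel{v^{(1)}_0 \otimes v^{(3)}_{s-1}}\over\longrightarrow S_{s-2} F_3 \otimes F_2,$$
so that
$$q(\hat{e_i}) \;=\; \sum_{j \neq i} (-1)^{i+j}\, d(e_j)\, v^{(3)}_{s-1}(\hat{e_i}, \hat{e_j}) \;=\; \sum_{j \neq i} (-1)^{i+j}\, Y_j\, v^{(3)}_{s-1}(\hat{e_i}, \hat{e_j}).$$
Then $v^{(1)}_{top}(\hat{e_i})$ is the lift of $q(\hat{e_i})$ along the injection $S_{s-1} F_3 \hookrightarrow S_{s-2} F_3 \otimes F_2$ determined by $g^{s-1} \mapsto g^{s-2} \otimes f_n$ (i.e., by $d$).

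First I would substitute into $q$ the inductive form of $v^{(3)}_{s-1}(\hat{e_i}, \hat{e_j})$ obtained by running the proof of Proposition \ref{hbn3} on the $(n-2)\times(n-3)$ submatrix of $X$ with rows $i$ and $j$ deleted. This yields the decomposition
$$v^{(3)}_{s-1}(\hat{e_i}, \hat{e_j}) \;=\; \delta_{\hat{i},\hat{j}} \,+\, P_{\hat{i},\hat{j}}\,(g^{s-2} \otimes f_n),$$
where $\delta_{\hat{i},\hat{j}}$ gathers the components supported on $g^{s-2} \otimes f_k$ with $k \leq n-1$.

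The key step is to verify that $\sum_{j \neq i}(-1)^{i+j} Y_j\, \delta_{\hat{i},\hat{j}} = 0$. This is the analogue of the cancellation $\sum_{i}(-1)^{i+1} \alpha_{in} \otimes \delta_{\hat{i},\hat{n}} = 0$ used in the proof of Proposition \ref{hbn3}, and it is guaranteed by the Bruns cycle construction of \cite[Section 11]{W18}: by construction $q$ lies in $\ker(1 \otimes d_2)$, and since $\ker(d_2) = R f_n$ by exactness of the Hilbert-Burch complex, this kernel equals $S_{s-2} F_3 \otimes R f_n$, which is exactly the image of the lifting map. Alternatively the vanishing can be checked directly by combining the minor identities of Lemma \ref{minorlemma} and Lemma \ref{relat1} with the inductive description of $\delta_{\hat{i},\hat{j}}$.

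Once this cancellation is in place, the only surviving component of $q(\hat{e_i})$ is along $g^{s-2} \otimes f_n$, with coefficient $\sum_{j \neq i}(-1)^{i+j} Y_j P_{\hat{i},\hat{j}}$; lifting through $g^{s-1} \mapsto g^{s-2} \otimes f_n$ produces the stated formula. The main obstacle is the vanishing of the $\delta$-terms; while conceptually this is a consequence of the general representation-theoretic framework (Bruns cycles are cycles), a direct combinatorial verification would require additional bookkeeping of higher minor relations.
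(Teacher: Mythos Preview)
Your proposal is correct and follows essentially the same route as the paper: set up $q$ as in Proposition~\ref{splitn1}, decompose $v^{(3)}_{s-1}(\hat e_i,\hat e_j)=\delta_{\hat i,\hat j}+P_{\hat i,\hat j}(g^{s-2}\otimes f_n)$, use that $q$ lands in $\ker(1\otimes d_2)=S_{s-2}F_3\otimes Rf_n$ to kill the $\delta$-contribution, and lift the remaining $f_n$-component. The paper states the vanishing of $\sum_j(-1)^{i+j}Y_j\delta_{\hat i,\hat j}$ slightly more tersely, but the justification is exactly the one you give.
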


\begin{proof}
Set $q$ as in Proposition \ref{splitn1} and look at the exact complex
\begin{equation}
 0 \longrightarrow S_{s-1} F_3  \buildrel{1 \otimes d_3}\over\longrightarrow  S_{s-2} F_3 \otimes F_2 \buildrel{1 \otimes d_2}\over\longrightarrow S_{s-2} F_3 \otimes F_1. 
\end{equation}
Also in this case, the image of $q$ is contained in the kernel of $1 \otimes d_2$ and hence in the image of 
$1 \otimes d_3$. We have
$$ q(\hat{e_{i}}) = \sum_{j \neq i}(-1)^{i+j}Y_j v^{(3)}_{s-1}(\hat{e_i},\hat{e_j}) = \sum_{j \neq i}(-1)^{i+j}Y_j (\delta_{\hat{i},\hat{j}} + \beta_{\hat{i},\hat{j}}(g^{s-2} \otimes f_n)) . $$ But now, since $q$ is in the kernel of $1 \otimes d_2$, we must have $ \sum_{j \neq i}(-1)^{i+j}Y_j \delta_{\hat{i},\hat{j}} = 0$ and thus 
 $   q(\hat{e_{i}}) = \sum_{j \neq i}(-1)^{i+j}Y_j P_{\hat{i},\hat{j}}(g^{s-2} \otimes f_n).     $ The thesis follows by lifting.
\end{proof}

Putting together Propositions \ref{hbn2},\ref{hbn3} and \ref{hbn1} and computing the explicit entries of the maps one gets the differentials described in Theorem \ref{thhbn}.

\section{Format $(1,4,m+3,m)$}\label{sec:14nn-3}

In this section we compute complexes analogous to the ones of the previous section for the format $(1,4,m+3,m)$. The main result for the split exact case is presented in Theorem \ref{thsplitm} and the required computations will be described after it. In the second part of the section we sketch the computation of the complex described in Theorem \ref{thhbm}.

\subsection{Split exact complex}
In the following $R_0$ can be taken to be a field of characteristic zero.
Let $m \geq 1$ and consider the free resolution 
\begin{equation}
\label{split14m+3m}
\FF: 0 \longrightarrow F_3 \buildrel{d_3}\over\longrightarrow  F_2 \buildrel{d_2}\over\longrightarrow F_1 \buildrel{d_1}\over\longrightarrow R_0 
\end{equation}
on the free $R_0$-modules $F_1$,$F_2$,$F_3$ having bases $\lbrace e_1, e_2, e_3, e_4\rbrace$, $\lbrace f_1, \ldots, f_{m+3} \rbrace$, $\lbrace g_1, \ldots, g_m\rbrace$ and differentials
$$d_3 =\bmatrix 0 & \ldots & 0 \\ 0 & \ldots  & 0 \\ 0 & \ldots & 0 \\ & I_n & \endbmatrix , d_2=\bmatrix 1 & 0 & 0 & 0 & \ldots  & 0\\  0 & 1 & 0 &0 & \ldots  & 0\\ 0 & 0 & 1 &0 & \ldots  & 0\\ 0 & 0 & 0 &0 & \ldots  & 0 \endbmatrix , d_1=\bmatrix 0 & 0 & 0 & 1 \endbmatrix. $$

In the statement of the theorem that we prove in this section we take the same notation of Definition \ref{minors-pfaffians}, and, moreover,
we denote by $$ L_{s,t}:= b_{12}^sb_{34}^t - b_{13}^sb_{24}^t+ b_{23}^sb_{14}^t, $$ for $1 \leq s,t \leq m$ certain mixed-pfaffians in the variables $b_{ij}$.


\begin{thm}
\label{thsplitm}
Let $m \geq 4$. The complex $\FF^{top}_\bullet$ associated to the complex (\ref{split14m+3m}) is the minimal free resolution of the ideal $J_m$ generated by the entries of $v^{(1)}_{top}$. If $m=2s$ is even
$$J_{2s}= \left( \sum_{u,t}B^{ut}_{12,13}\Gamma_{\hat{u},\hat{t}}, \sum_{u,t}B^{ut}_{12,23}\Gamma_{\hat{u},\hat{t}}, \sum_{u,t}B^{ut}_{13,23}\Gamma_{\hat{u},\hat{t}}, \Gamma \right),$$ 
If $m=2s+1$ is odd
$$J_{2s+1}= \left( \sum_{u}b^{u}_{23}\Gamma_{\hat{u}}, \sum_{u}b^{u}_{13}\Gamma_{\hat{u}}, \sum_{u}b^{u}_{12}\Gamma_{\hat{u}}, \sum_{u,t,v}B^{utv}_{12,13,23}\Gamma_{\hat{u},\hat{t},\hat{v}} \right).$$ In both ideals the sums are taken over all the required $1 \leq u < t < v \leq m.$
The other differential of the complex $\FF^{top}_\bullet$ are:
$$    v^{(2)}_{top}= \bmatrix 
\lbrace e_1,f_1 \rbrace & \lbrace e_1,f_2 \rbrace & \ldots & \lbrace e_1,f_{m+3} \rbrace   \\ 
 \lbrace e_2,f_1 \rbrace & \lbrace e_2,f_2 \rbrace & \ldots & \lbrace e_2,f_{m+3} \rbrace  \\ 
\lbrace e_3,f_1 \rbrace & \lbrace e_3,f_2  \rbrace & \ldots & \lbrace e_3,f_{m+3} \rbrace  \\ 
\lbrace e_4,f_1 \rbrace & \lbrace e_4,f_2 \rbrace  & \ldots & \lbrace e_4,f_{m+3} \rbrace  \\ 
 \endbmatrix,
 v^{(3)}_{top}= \bmatrix 
 -b_{23}^1 & -b_{23}^2 & \ldots  & -b_{23}^m
 \\  b_{13}^1 & b_{13}^2 & \ldots  & b_{13}^m
 \\ -b_{12}^1 & -b_{12}^2 & \ldots  & -b_{12}^m
 \\ L_{1,1} & L_{1,2}+c_{12}  & \ldots  & L_{1,m}+c_{1m} 
 \\ \vdots & \vdots  & \ldots  & \vdots
 \\  L_{m,1}-c_{1m}    &  L_{m,2}-c_{2m}   &   \ldots  & L_{m,m}
 \endbmatrix,  $$
  where, for $i,j,k=1,2,3$, $h\geq 4$ and $1 \leq u < t < v < w \leq m$, if $m$ is even:
 
$$ \lbrace e_i,f_i \rbrace =  \Gamma +  \sum_{u,t}B^{ut}_{i4,jk}\Gamma_{\hat{u},\hat{t}}, 
\mbox{  } \lbrace e_i,f_j \rbrace =  \sum_{u,t}B^{ut}_{j4,jk}\Gamma_{\hat{u},\hat{t}}, \mbox{  }  
\lbrace e_i,f_h \rbrace =  \sum_{u \neq h-3}b^{u}_{jk}\Gamma_{\hat{u},\widehat{h-3}}, $$ 
$$ \lbrace e_4,f_j \rbrace =  \sum_{u,t}B^{ut}_{ij,jk}\Gamma_{\hat{u},\hat{t}}  + \sum_{u,t,v,w}B^{utvw}_{12,13,23,j4}\Gamma_{\hat{u},\hat{t},\hat{v},\hat{w}}, 
\mbox{  }  \lbrace e_4,f_h \rbrace = \sum_{u,t,v \neq h-3}B^{utv}_{12,13,23}\Gamma_{\hat{u},\hat{t},\hat{v},\widehat{h-3}}.  $$ 
 If instead $m$ is odd:  
$$ \lbrace e_i,f_i \rbrace =   \sum_{u,t,v}B^{utv}_{i4,ij,ik}\Gamma_{\hat{u},\hat{t},\hat{v}}, 
\mbox{  } \lbrace e_i,f_j  \rbrace=  \sum_{u,t,v}B^{utv}_{j4,ij,ik}\Gamma_{\hat{u},\hat{t},\hat{v}} - \sum_{u}b_{ij}^u \Gamma_{u}, 
$$  $$  
\lbrace e_i,f_h \rbrace =  \sum_{u,t \neq h-3}B^{ut}_{ij,ik}\Gamma_{\hat{u},\hat{t},\widehat{h-3}}, \mbox{  } \lbrace e_4,f_j \rbrace =  \sum_{u}b_{j4}^u \Gamma_{u}, 
\mbox{  } \lbrace e_4,f_h \rbrace = \Gamma_{h-3}.  $$ 
Moreover, $\FF^{top}_\bullet$ is acyclic.
\end{thm}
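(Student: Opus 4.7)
The strategy parallels that of Theorem \ref{thsplitn}. I would first derive the three differentials of $\FF^{top}_\bullet$ by computing the maps $v^{(i)}_j$ iteratively, following the procedure of Section \ref{sec:basic}, and then establish acyclicity by the Buchsbaum-Eisenbud criterion \cite{BE73}. Because $\FF$ is split exact, the underlying differentials act very simply ($d(e_i)=0$ for $i\le 3$, $d(e_4)=1$, $d(f_i)=e_i$ for $i\le 3$ while $d(f_h)=0$ for $h\ge 4$, and $d(g_k)=f_{k+3}$), so the Leibniz-rule computations are completely explicit. Starting from $v^{(3)}_1$, which only involves the $b_{ij}^k$, one lifts Bruns cycles inductively in the style of Propositions \ref{splitn3} and \ref{splitn1}. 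The variables $c_{ut}$ enter through the generic kernel elements $C \in \bigwedge^2 F_3$ and its higher powers $C^{(s)}$ from (\ref{cvar}) and Definition \ref{highC}, which are needed to lift through $d_3$ once one reaches exterior powers of rank $\ge 2$ on $F_3$. The dichotomy between even and odd $m$ in the stated formulas reflects the fact that the top surviving graded component of each critical representation depends on the parity: for $m$ even one reaches the component containing the full pfaffian $\Gamma$, whereas for $m$ odd a surviving $F_3^*$ factor remains.

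Once the matrices are computed, the identities $v^{(1)}_{top}\circ v^{(2)}_{top}=0$ and $v^{(2)}_{top}\circ v^{(3)}_{top}=0$ follow from the construction of the critical representations. For acyclicity one applies Buchsbaum-Eisenbud and verifies: that the ranks of $v^{(3)}_{top}$, $v^{(2)}_{top}$, $v^{(1)}_{top}$ are $m$, $3$, $1$ respectively; that $\mbox{\rm depth}\, J_m \ge 1$, which is immediate since the explicit generators listed in the statement are nonzero polynomials; that $\mbox{\rm depth}\, I_3(v^{(2)}_{top}) \ge 2$; and that $\mbox{\rm depth}\, I_m(v^{(3)}_{top}) \ge 3$. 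For the rank conditions one exhibits concrete nonvanishing maximal minors. The identification of $J_m$ as the ideal generated by the entries of $v^{(1)}_{top}$ is then immediate, and minimality over $R$ follows because every entry of the three differentials is a nonconstant polynomial in the defect variables, and hence lies in the graded maximal ideal.

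The main obstacle will be the depth estimates on $I_m(v^{(3)}_{top})$ and, to a lesser extent, on $I_3(v^{(2)}_{top})$. The matrix $v^{(3)}_{top}$ mixes the variables $b_{ij}^k$ in its top three rows with the hybrid expressions $L_{u,t}+c_{ut}$ in its bottom $m\times m$ block, so classical regularity results on pfaffians of a single generic skew-symmetric matrix do not apply directly. My plan is to exhibit a regular sequence of length three inside $I_m(v^{(3)}_{top})$ by suitable specialization of some of the defect variables and reduction to known regular sequences of pfaffians and minors from \cite{CVW18}. An analogous but easier argument is needed for $I_3(v^{(2)}_{top})$. Because the formulas differ noticeably between the cases $m=2s$ and $m=2s+1$, two separate arguments will likely be required, matching the even/odd dichotomy in the statement of the theorem.
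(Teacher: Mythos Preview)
Your plan for computing the three differentials is essentially the paper's: the inductive lifting of Bruns cycles you describe is precisely what is carried out in Propositions \ref{splitm1} and \ref{splitm2}, and the paper concludes by saying that combining those propositions yields the matrices in Theorem \ref{thsplitm}. So on that part you are aligned with the source.

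The divergence is in how acyclicity is obtained. The paper does \emph{not} give a direct Buchsbaum--Eisenbud argument for this format. Unlike the $(1,n,n,1)$ case (Theorem \ref{thsplitn}), where an independent depth check is written out, here the paper simply invokes the general framework: by Remark \ref{rmk:slice} the ring $R$ is a slice of the generic ring ${\hat R}_{gen}$, and both $\FF^{gen}_\bullet$ and $\FF^{top}_\bullet$ are known to be acyclic over it by the results of \cite{W18} and \cite{LW19} in characteristic zero. The introduction says this explicitly: independent acyclicity proofs are given only for the $(1,n,n,1)$ formats.

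So your proposed route through explicit depth estimates for $I_m(v^{(3)}_{top})$ and $I_3(v^{(2)}_{top})$ would be a genuine strengthening, producing a proof independent of \cite{W18}. But you correctly identify the depth-$3$ condition on $I_m(v^{(3)}_{top})$ as the crux, and your plan there (specialize defect variables and reduce to regular sequences from \cite{CVW18}) is still only a sketch: the bottom $m\times m$ block of $v^{(3)}_{top}$ is not a generic skew-symmetric matrix (the diagonal entries $L_{u,u}$ are nonzero pfaffians in the $b$'s, and the off-diagonal entries mix $L_{u,t}$ with $c_{ut}$), so naive specializations that kill the $b$'s collapse the top three rows and leave only one minor. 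Finding a specialization that preserves three independent minors of the right depth is the real work, and neither you nor the paper supplies it. If you cannot close this gap, you should fall back on Remark \ref{rmk:slice} as the paper does.
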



\begin{example}
In the case $m=2$, the complex $\FF^{top}_\bullet$ is the minimal free resolution of the ideal $ J_5= (b_{13}^1 b_{12}^2 - b_{13}^2 b_{12}^1, b_{12}^1 b_{23}^2 - b_{12}^2 b_{23}^1, b_{23}^1 b_{13}^2 - b_{23}^2 b_{13}^1, c_{12}).  $ The other top maps are 
$$ \footnotesize v^{(2)}_{top}= \bmatrix 
b_{14}^1 b_{23}^2 - b_{14}^2 b_{23}^1 + c_{12} & b_{24}^1 b_{23}^2 - b_{24}^2 b_{23}^1  & b_{34}^1 b_{23}^2 - b_{34}^2 b_{23}^1 & b_{23}^2 & -b_{23}^1   \\ 
 b_{14}^1 b_{13}^2 - b_{14}^2 b_{13}^1 & b_{24}^1 b_{13}^2 - b_{24}^2 b_{13}^1 + c_{12} & b_{34}^1 b_{13}^2 - b_{34}^2 b_{13}^1 & -b_{13}^2 & b_{13}^1   \\ 
b_{14}^1 b_{12}^2 - b_{14}^2 b_{12}^1 & b_{24}^1 b_{12}^2 - b_{24}^2 b_{12}^1   & b_{34}^1 b_{12}^2 - b_{34}^2 b_{12}^1 + c_{12} & b_{12}^2 & -b_{12}^1   \\ 
b_{12}^1 b_{13}^2 - b_{12}^2 b_{13}^1 & b_{23}^1 b_{12}^2 - b_{23}^2 b_{12}^1   & b_{13}^1 b_{23}^2 - b_{13}^2 b_{23}^1 & 0 & 0  \\ 
 \endbmatrix
 $$  $$ \footnotesize
 v^{(3)}_{top}= \bmatrix 
 -b_{23}^1 & -b_{23}^2 
 \\  b_{13}^1 & b_{13}^2 
 \\ -b_{12}^1 & -b_{12}^2 
 \\ b_{12}^1b_{34}^1 - b_{13}^1b_{24}^1+ b_{14}^1b_{23}^1   & 
b_{12}^2b_{34}^1 - b_{13}^2b_{24}^1+ b_{23}^2b_{14}^1 + c_{12}   
 \\  b_{12}^1b_{34}^2 - b_{13}^1b_{24}^2+ b_{23}^1b_{14}^2 - c_{12}  &  
 b_{12}^2b_{34}^2 - b_{13}^2b_{24}^2+ b_{14}^2b_{23}^2  
 \endbmatrix.  
   $$
\end{example}

We begin our computation by looking at the maps $v^{(3)}_1, v^{(2)}_1, v^{(1)}_1$, as done for the format $(1,n,n,1)$. We get
$$e_i^.e_j=  d(G_{ij}), \mbox{ for } 1 \leq i < j \leq 3, \, \, \, 
 e_i^.e_4 = -f_i + d(G_{i4}). $$
$$e_i^.f_h=  -G_{ih}, \mbox{ for } i \leq 3, \, h \leq 3, \, \, \, e_4^.f_h=  G_{h4}, \mbox{ for } h \leq 3, $$
$$e_i^.f_h=  0, \mbox{ for } i \leq 3, \, h \geq 4, \, \, \, e_4^.f_h=  g_{h-3}, \mbox{ for } h \geq 4. $$
$$e_i^.e_j^.e_k =  0, \mbox{ for } 1 \leq i < j < k \leq 3, \, \, \, 
 e_i^.e_j^.e_4 = (-1)^{i+j}G_{ij}. $$

 For this format the top component of $W(d_3)$ is already the map $v^{(3)}_{2}$. Recalling the definition of the sets of defect variables given in Definitions \ref{bvar} and \ref{cvar}, we compute that map as before using the relations in (\ref{v3,2}).
Hence, setting $\varepsilon:= e_1 \wedge e_2 \wedge e_3 \wedge e_4$, we have $$ v^{(3)}_{top}(\varepsilon)= v^{(3)}_{2}(\varepsilon)=  G_{12} \otimes e_3^.e_4 - G_{13} \otimes e_2^.e_4 +  G_{23} \otimes e_1^.e_4 + d(C). $$

For arbitrary $n \geq 2$, we first compute the maps $v^{(1)}_{n}$ corresponding to graded components of $W(d_1)$. 
For $n=2s$, $v^{(1)}_{n}: ( \bigwedge^4 F_1 )^{\otimes s} \otimes F_1 \to \bigwedge^n F_3$, while for $n=2s+1$ we have 
$v^{(1)}_{n}: ( \bigwedge^4 F_1 )^{\otimes s} \otimes \bigwedge^3 F_1 \to \bigwedge^n F_3.$

All those maps are obtained by lifting the image of $ q^{(1)}_{n} = v^{(1)}_{n-1} \otimes v^{(3)}_{1} - v^{(1)}_{n-2} \wedge v^{(3)}_{2} $ and hence, 
starting from $v^{(1)}_{1}$,$v^{(1)}_{2}$, is possible to compute inductively all these higher maps. 
This will appear in terms of the elements $C^{(s)}$ introduced in Definition \ref{highC}.

\begin{prop}
\label{splitm1}
 The map $v^{(1)}_{n}$ is obtained by lifting to $\bigwedge^n F_3$ the image of the map $ q^{(1)}_{n}$. Let $i,j,k = 1,2,3$.
For $n=2s$, $$  v^{(1)}_{n}(e_i) = (-1)^{i} G_{ij} \wedge G_{ik} \wedge C^{(s-1)} \mbox{ and }  v^{(1)}_{n}(e_4) =  C^{(s)}. $$
For $n=2s+1$, $$  v^{(1)}_{n}(e_i) =  G_{jk} \wedge C^{(s)} \mbox{ and }  v^{(1)}_{n}(e_4) =  G_{12} \wedge G_{13} \wedge G_{23} \wedge C^{(s-1)}. $$
In particular for every $n \geq 4$, $ v^{(1)}_{n}(e_i) =  v^{(1)}_{n-2}(e_i) \wedge C.$
 \end{prop}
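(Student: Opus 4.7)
The plan is to argue by induction on $n$, with base cases $n=1,2$ already established: for $n=1$ the multiplicative formula $e_i^{\cdot}e_j^{\cdot}e_4 = (-1)^{i+j}G_{ij}$ matches the claim with $C^{(0)}=1$ and $C^{(-1)}=0$, and for $n=2$ the explicit computation of $v^{(3)}_2(\varepsilon)$ given just above the proposition agrees with $C^{(1)}=C$ together with the terms $G_{ab}\otimes e_c^{\cdot}e_d$. For the inductive step, evaluate $q^{(1)}_n = v^{(1)}_{n-1}\otimes v^{(3)}_1 - v^{(1)}_{n-2}\wedge v^{(3)}_2$ on the input $(\varepsilon,\ldots,\varepsilon,e_i)$ when $n$ is even, or on the analogous input with a $\bigwedge^3 F_1$ slot when $n$ is odd, by applying the Leibniz rule and substituting the inductive formulas for $v^{(1)}_{n-1}$ and $v^{(1)}_{n-2}$ together with the already computed $v^{(3)}_1$ and $v^{(3)}_2$.

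Next I would verify that $q^{(1)}_n$ takes values in the kernel of the next Koszul map $\bigwedge^{n-1}F_3\otimes F_2\to \bigwedge^{n-2}F_3\otimes S_2F_2$, so that its image lifts to $\bigwedge^n F_3$ through the induced Koszul differential $d$. Because $d_3$ is injective in the split case, this lift is unique, and it therefore suffices to check that the claimed formulas for $v^{(1)}_n(e_i)$, $v^{(1)}_n(e_4)$ map onto $q^{(1)}_n$ under $d$. For $n=2s$ even, applying Leibniz to $(-1)^i G_{ij}\wedge G_{ik}\wedge C^{(s-1)}$ produces two terms involving $d(G_{ij})=e_i^{\cdot}e_j$ and $d(G_{ik})=e_i^{\cdot}e_k$, together with a third term $\pm G_{ij}\wedge G_{ik}\wedge d(C^{(s-1)})$. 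The defining relation $d(C^{(s-1)})=d(C)\wedge C^{(s-2)}$ from Definition \ref{highC} ensures that this third term absorbs exactly the $d(C)$ contribution arriving from $v^{(3)}_2$; the case $n=2s+1$ odd is handled by the same template with the appropriate parity shift, using $v^{(1)}_{n-1}(e_4)=C^{(s)}$ in place of the $G_{ij}\wedge G_{ik}$ factor.

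The supplementary identity $v^{(1)}_n(e_i) = v^{(1)}_{n-2}(e_i)\wedge C$ for $n\geq 4$ then follows since both sides are unique lifts (along the injective $d$) of the same element of $\bigwedge^{n-1}F_3\otimes F_2$, invoking the compatibility between $C^{(s-1)}$ and $C^{(s-2)}\wedge C$ inherent in the iterative definition of $C^{(s)}$ in characteristic zero. The main obstacle will be the bookkeeping: carefully tracking the Koszul signs arising from Leibniz applied across mixed wedges of $F_2$ and $F_3$, running the odd and even cases in parallel with their distinct domains, and pinning down the precise scaling of $C^{(s)}$ relative to $C^{\wedge s}$ so that no spurious scalar factors appear in the telescoping step. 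A secondary technical point is verifying the kernel condition for $q^{(1)}_n$ directly from the relations $e_i^{\cdot}e_j = d(G_{ij})$ and $e_i^{\cdot}e_4 = -f_i + d(G_{i4})$, which amounts to a compatibility among the multiplicative structure maps that is automatic from the associativity of the DG-algebra structure on $\FF$.
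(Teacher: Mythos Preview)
Your approach is essentially the same as the paper's: induction on $n$, explicit evaluation of $q^{(1)}_n = v^{(1)}_{n-1}\otimes v^{(3)}_1 - v^{(1)}_{n-2}\wedge v^{(3)}_2$ on $e_1$ and $e_4$ using the inductive formulas and the already computed $v^{(3)}_1$, $v^{(3)}_2$, followed by lifting to $\bigwedge^n F_3$ (the paper carries this out concretely in the even case and sketches the odd case). Two minor points: the paper takes $v^{(1)}_0=d_1$ and $v^{(1)}_1$ as base cases rather than $n=1,2$ (your reference to $v^{(3)}_2$ at $n=2$ is an ingredient in the recursion, not a base case for $v^{(1)}$), and the supplementary identity $v^{(1)}_n(e_i)=v^{(1)}_{n-2}(e_i)\wedge C$ is read off directly from the explicit formulas once they are established, so there is no need to argue via uniqueness of lifts.
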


\begin{proof}
Since we already know $v^{(1)}_{0}= d_1$ and $v^{(1)}_{1}$, we can proceed by induction on $n$ and assume the thesis to be true for $n-2$ and $n-1$. Let us separate the even and the odd case. \\
\bf Case $n=2s$: \rm \\
Consider $ q^{(1)}_{n}: ( \bigwedge^4 F_1 )^{\otimes (s-1)} \otimes \bigwedge^4 F_1 \otimes F_1 \to \bigwedge^{n-1} F_3 \otimes F_2  $ defined as above. We show that the image of this map can be lifted to $\bigwedge^n F_3$.
It is sufficient to show this only for $e_1$ and $e_4$. We have
$$ q^{(1)}_{n}(e_1)=  v^{(1)}_{n-1}(e_2) \otimes e_1^.e_2 - v^{(1)}_{n-1}(e_3) \otimes e_1^.e_3 + v^{(1)}_{n-1}(e_4) \otimes e_1^.e_4 - v^{(1)}_{n-2}(e_1) \wedge v^{(3)}_{2}(\varepsilon) = $$
$$  G_{13} \wedge C^{(s-1)} \otimes d(G_{12}) - G_{12} \wedge C^{(s-1)} \otimes d(G_{13}) + G_{12} \wedge G_{13} \wedge G_{23} \wedge C^{(s-2)} \otimes e_1^.e_4 + $$ $$ - G_{13} \wedge G_{12} \wedge C^{(s-2)} \wedge [G_{12} \otimes e_3^.e_4 - G_{13} \otimes e_2^.e_4 +  G_{23} \otimes e_1^.e_4 + d(C)] = $$
$$ G_{13} \wedge C^{(s-1)} \otimes d(G_{12}) - G_{12} \wedge C^{(s-1)} \otimes d(G_{13}) - G_{13} \wedge G_{12} \wedge C^{(s-2)} \wedge d(C).  $$ By Definition \ref{highC}, this lifts to $G_{13} \wedge G_{12} \wedge C^{(s-1)}$. On the other hand:
$$ q^{(1)}_{n}(e_4)=  v^{(1)}_{n-1}(e_1) \otimes e_4^.e_1 - v^{(1)}_{n-1}(e_2) \otimes e_4^.e_2 + v^{(1)}_{n-1}(e_3) \otimes e_4^.e_3 - v^{(1)}_{n-2}(e_4) \wedge v^{(3)}_{2}(\varepsilon) = $$
$$  G_{23} \wedge C^{(s-1)} \otimes e_4^.e_1 - G_{13} \wedge C^{(s-1)} \otimes e_4^.e_2 + G_{12} \wedge C^{(s-1)} \otimes e_4^.e_3 + $$ $$ -  C^{(s-1)} \wedge [G_{12} \otimes e_3^.e_4 - G_{13} \otimes e_2^.e_4 +  G_{23} \otimes e_1^.e_4 + d(C)] =  -  C^{(s-1)} \wedge  d(C).  $$  This lifts to $C^{(s)}$. \\
\bf Case $n=2s+1$: \rm \\
Consider $ q^{(1)}_{n}: ( \bigwedge^4 F_1 )^{\otimes (s-1)} \otimes \bigwedge^4 F_1 \otimes \bigwedge^3 F_1 \to \bigwedge^{n-1} F_3 \otimes F_2   $. In this case one has to compute 
$$ q^{(1)}_{n}(e_1)=  v^{(1)}_{n-1}(e_2) \otimes e_3^.e_4 - v^{(1)}_{n-1}(e_3) \otimes e_2^.e_4 + v^{(1)}_{n-1}(e_4) \otimes e_2^.e_3 - v^{(1)}_{n-2}(e_1) \wedge v^{(3)}_{2}(\varepsilon)$$ and 
$$ q^{(1)}_{n}(e_4)=  v^{(1)}_{n-1}(e_1) \otimes e_2^.e_3 - v^{(1)}_{n-1}(e_2) \otimes e_1^.e_3 + v^{(1)}_{n-1}(e_3) \otimes e_1^.e_2 - v^{(1)}_{n-2}(e_4) \wedge v^{(3)}_{2}(\varepsilon) $$
and show in a similar way that images can be lifted to $\bigwedge^n F_3$. 
\end{proof}

The idea to compute all maps $v^{(2)}_{n}$ corresponding to components of $W(d_2)$ is quite similar. 
For $n=2s$, $ v^{(2)}_n: \bigwedge ^3 F_1 \otimes F_2 \cong (\bigwedge^4 F_1)^{\otimes s-1} \otimes \bigwedge ^3 F_1 \otimes F_2 \to \bigwedge^n F_3, $ while for $n=2s+1$
$ v^{(2)}_n: F_1 \otimes F_2 \cong (\bigwedge^4 F_1)^{\otimes s} \otimes F_1 \otimes F_2 \to \bigwedge^n F_3. $ 
The general way to construct these maps is to consider
$$ q_n :=  v^{(2)}_{n-1} \otimes v^{(3)}_1 -  v^{(2)}_{n-2} \wedge v^{(3)}_{2} + v^{(1)}_{n-1} \otimes 1_{F_2},  $$  and show that its image, contained in $\bigwedge^{n-1}F_3 \otimes F_2$, can be lifted to $ \bigwedge^{n}F_3 $.

\begin{prop}
\label{splitm2}
 The map $v^{(2)}_{n}$ is obtained by lifting to $\bigwedge^n F_3$ the image of the map $ q_{n}$. Let $i,j,k = 1,2,3$ and $h \geq 4$.
For $n=2s$, $$  v^{(2)}_{n}(e_i,f_i) =  C^{(s)} +G_{i4} \wedge G_{jk} \wedge C^{(s-1)}, \mbox{  }  v^{(2)}_{n}(e_i,f_j) =  G_{j4} \wedge G_{jk} \wedge C^{(s-1)}, \mbox{  } $$ $$  v^{(2)}_{n}(e_i,f_h) =  G_{jk} \wedge g_{h-3} \wedge C^{(s-1)}, $$ $$   
v^{(2)}_{n}(e_4,f_j) =  G_{ij} \wedge G_{jk} \wedge C^{(s-1)}+  G_{12} \wedge G_{13} \wedge G_{23} \wedge G_{i4} \wedge C^{(s-2)} , 
$$ $$ v^{(2)}_{n}(e_4,f_h) = G_{12} \wedge G_{13} \wedge G_{23} \wedge g_{h-3} \wedge C^{(s-2)}.  $$

For $n=2s+1$, $$  v^{(2)}_{n}(e_i,f_i) =  G_{i4} \wedge G_{ij} \wedge G_{ik} \wedge C^{(s-1)}, \mbox{  }  v^{(2)}_{n}(e_i,f_j) =  G_{j4} \wedge G_{ij} \wedge G_{ik} \wedge C^{(s-1)} - G_{ij} \wedge C^{(s)}, $$ $$   v^{(2)}_{n}(e_4,f_j) =  G_{j4} \wedge  C^{(s)}, \mbox{  }  v^{(2)}_{n}(e_i,f_h) = G_{ij} \wedge G_{ik} \wedge g_{h-3} \wedge C^{(s-1)}, \mbox{    } v^{(2)}_{n}(e_4,f_h) = g_{h-3} \wedge C^{(s)}.  $$
 \end{prop}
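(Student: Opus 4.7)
The plan is to proceed by induction on $n$, following closely the strategy used in Proposition \ref{splitm1}. The base cases are $v^{(2)}_1$ (computed explicitly in the multiplication formulas at the start of this subsection) and $v^{(2)}_2$, which can be obtained from $q_2 = v^{(2)}_1 \otimes v^{(3)}_1 + v^{(1)}_1 \otimes 1_{F_2}$ by direct calculation, the middle term $v^{(2)}_0 \wedge v^{(3)}_2$ being absent. With these in hand, the inductive step treats $n \geq 3$: assume the asserted formulas for $v^{(2)}_{n-1}$ and $v^{(2)}_{n-2}$, together with the already-proved expressions for $v^{(3)}_1$, $v^{(3)}_2$ and for $v^{(1)}_{n-1}$ from Proposition \ref{splitm1}.

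I would then compute $q_n(e_i, f_l)$ case by case, splitting according to whether $i\in\{1,2,3\}$ or $i=4$, and whether $l=i$, $l\in\{1,2,3\}\setminus\{i\}$, or $l\geq 4$. Substituting the inductive formulas and expanding via the multiplication rules $e_i^.e_j = d(G_{ij})$ (for $i,j \leq 3$), $e_i^.e_4 = -f_i + d(G_{i4})$, and the formulas for $e_i^.f_h$ at the beginning of the subsection, the goal in each case is to exhibit an element $\omega\in\bigwedge^n F_3$ such that $q_n(e_i,f_l) = d(\omega)$ and to verify that $\omega$ coincides with the expression claimed in the statement. This uses the Leibniz rule applied to wedge products of the $G_{ij}$ with the higher forms $C^{(s)}$, together with the defining relation $d(C^{(s)}) = d(C) \wedge C^{(s-1)}$ from Definition \ref{highC}. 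Showing that $q_n(e_i,f_l)$ is actually a boundary of some element of $\bigwedge^n F_3$ subsumes both the statement that a lifting exists (the image sits in the image of $\bigwedge^n F_3 \to \bigwedge^{n-1} F_3 \otimes F_2$) and the identification of the lifting with the claimed formula.

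The parity of $n$ must be handled separately, as in Proposition \ref{splitm1}: for $n=2s$ the lifted element carries a trailing $C^{(s-1)}$, or $C^{(s-2)}$ in the more intricate $(e_4,f_j)$ and $(e_4,f_h)$ cases, while for $n=2s+1$ the trailing piece is $C^{(s)}$ or $C^{(s-1)}$. The three-term structure of $q_n$ is exactly what drives the required cancellations: the first term $v^{(2)}_{n-1}\otimes v^{(3)}_1$ generates the bulk of the product; the second term $v^{(2)}_{n-2}\wedge v^{(3)}_2$ corrects for the quadratic piece carrying $d(C)$; and the third term $v^{(1)}_{n-1}\otimes 1_{F_2}$ (absent from $q^{(1)}_n$) absorbs the linear contribution produced by the $-f_i$ summand of $e_i^.e_4$ and by the $f_l$ factor in $e_i^.f_l$. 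This third term is precisely what makes the proof for $v^{(2)}_n$ structurally different from that for $v^{(1)}_n$.

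I expect the main obstacle to be the case analysis for $v^{(2)}_n(e_4,f_j)$, where $e_i^.e_4$ produces both an $f_i$ summand and a $d(G_{i4})$ summand, forcing extra wedge monomials such as $G_{12}\wedge G_{13}\wedge G_{23}\wedge G_{i4}\wedge C^{(s-2)}$ to appear in the lifting; verifying that the $v^{(1)}_{n-1}$ correction together with the antisymmetrization coming from $v^{(2)}_{n-2}\wedge v^{(3)}_2$ reproduces exactly these monomials, with correct signs and multiplicities, is the technical heart. Once this is carried through, uniqueness of the lifting is automatic because the map $\bigwedge^n F_3 \to \bigwedge^{n-1} F_3 \otimes F_2$ induced by $d_3$ is injective on the relevant wedge components, so no ambiguity coming from kernel elements can intrude, and the formulas displayed in the statement are forced.
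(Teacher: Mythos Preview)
Your proposal is correct and follows essentially the same approach as the paper: induction on $n$ assuming the formulas for $v^{(2)}_{n-1}$ and $v^{(2)}_{n-2}$, a case split according to whether $i<4$ or $i=4$ and whether the second index is $\le 3$ or $\ge 4$, separate treatment of the even and odd parity, and lifting of the resulting cycle to $\bigwedge^n F_3$. The paper's proof is in fact briefer than yours, writing down the explicit form of $q_n(e_i,f_h)$ in each parity and then declaring the remaining verification a ``long and technical routine'' to be omitted; your additional remarks on the role of the three summands of $q_n$ and on the injectivity forcing uniqueness of the lift are accurate elaborations of that routine.
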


\begin{proof}
Again one can proceed by induction on $n$, assuming the thesis true for $n-2$ and $n-1$. Assume $\lbrace i,j,k,r \rbrace = \lbrace 1,2,3,4 \rbrace$. If $n$ is even, one has to compute
$$ q_n(e_i,f_h) =  v^{(1)}_{n-1}(e_i) \otimes f_h - v^{(2)}_{n-1}(e_r,f_h) \otimes e_j^.e_k + v^{(2)}_{n-1}(e_k,f_h) \otimes e_j^.e_r + $$ $$ - v^{(2)}_{n-1}(e_j,f_h) \otimes e_k^.e_r + v^{(2)}_{n-2}(e_1,f_h) \wedge v^{(3)}_{2}(\varepsilon),  $$ while if $n$ is odd one computes
$$ q_n(e_i,f_h) =  v^{(1)}_{n-1}(e_i) \otimes f_h - v^{(2)}_{n-1}(e_r,f_h) \otimes e_i^.e_r + v^{(2)}_{n-1}(e_k,f_h) \otimes e_i^.e_k + $$ $$ - v^{(2)}_{n-1}(e_j,f_h) \otimes e_i^.e_j + v^{(2)}_{n-2}(e_1,f_h) \wedge v^{(3)}_{2}(\varepsilon).  $$
The result is obtained considering separately all the cases with $i < 4$, $i=4$, $h \leq 3$ and $h \geq 4$, and by lifting the corresponding cycles, as it was done in Proposition \ref{splitm1}. We omit the computations since they require only a long and technical routine.
\end{proof}

Combining all the preceding results we get the description of the differential of the complex $\FF^{top}_\bullet$ of Theorem \ref{thsplitm}.




\subsection{Direct sum of a generic Hilbert-Burch complex and an identity map}

Here, we give a sketch of the computation of the differentials of the complex presented in Theorem \ref{thhbm}. We use the same method used for the split exact case to compute the maps $v^{(i)}_j$.
Consider the same setting used in Section 1.2 to state Theorem \ref{thhbm}. In particular we let $v_0^{(3)},v_0^{(2)},v_0^{(1)}$ be the differentials of the complex (\ref{hilb14m+3m}).

Using again Lemma \ref{minorlemma}, we obtain the multiplication map $v^{(3)}_1: \bigwedge^2 F_1 \to F_2$ as
\begin{equation}
e_i^.e_j=  (-1)^{i+j+1} \left( X_{\hat{1}}^{\hat{i}\hat{j}} f_1 - X_{\hat{2}}^{\hat{i}\hat{j}} f_2 + X_{\hat{3}}^{\hat{i}\hat{j}} f_3  \right) + d(G_{ij}).
\end{equation}
 
 To compute all the other maps, we need the following relations involving minors of $X$.

\begin{lem} 
\label{relbis}
The following relations hold for every choice of distinct $1 \leq i,j,k \leq 4 $ and $1 \leq h \leq 3$:
\begin{equation} \label{rel1bis}
Y_i f_h - X_{jh} (e_j^.e_i - d(G_{ji})) - X_{kh} (e_k^.e_i - d(G_{ki}))  - X_{lh} (e_l^.e_i - d(G_{li})) = 0
\end{equation}
\begin{equation} \label{rel2}
Y_k(e_i^.e_j - d(G_{ij})) - Y_j(e_i^.e_k - d(G_{ik})) + Y_i(e_j^.e_k - d(G_{jk})) = 0. 
\end{equation}
\end{lem}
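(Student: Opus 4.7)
The plan is to prove both identities by showing that the left-hand side of each lies in the kernel of $d_2$, and then observing that this kernel is disjoint from the submodule of $F_2$ where the left-hand side actually lives, forcing it to vanish. Write $\alpha_{ab} := e_a^.e_b - d(G_{ab})$; from the explicit formula for $e_a^.e_b$ recorded just after the statement of Lemma~\ref{minorlemma} in this subsection, $\alpha_{ab}$ lies in the submodule $\langle f_1,f_2,f_3\rangle$ of $F_2$. Hence the left-hand side of (\ref{rel2}) already lies in $\langle f_1,f_2,f_3\rangle$, and, since we assume $1\le h\le 3$, the same is true for the left-hand side of (\ref{rel1bis}), because $Y_i f_h \in \langle f_1,f_2,f_3\rangle$ as well.

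Next, I would apply $d_2$ to each left-hand side, using the two facts $d_2(\alpha_{ab}) = Y_a e_b - Y_b e_a$ (which is exactly what Lemma~\ref{minorlemma} proves, and whose proof transfers verbatim to the $4\times 3$ case) and $d_2(f_h) = \sum_{a=1}^4 X_{ah} e_a$. For (\ref{rel2}) the six terms of the form $Y_\bullet Y_\bullet e_\bullet$ produced by $d_2$ cancel pairwise by a direct antisymmetrization in $i,j,k$. For (\ref{rel1bis}) the cancellation reduces to the classical Laplace identity $\sum_{a=1}^4 X_{ah} Y_a = 0$, valid because this sum is the expansion, along the repeated column, of the determinant of the $4\times 4$ matrix obtained by adjoining the $h$-th column of $X$ to $X$ itself.

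Finally, by exactness of $\FF$ together with the explicit form of $d_3$ given in (\ref{hilb14m+3m}), $\ker d_2 = \operatorname{im} d_3 = \langle f_4,\ldots,f_{m+3}\rangle$. Since each left-hand side lies simultaneously in $\langle f_1,f_2,f_3\rangle$ and in $\ker d_2 = \langle f_4,\ldots,f_{m+3}\rangle$, whose intersection is zero, both relations follow. The main technical obstacle is sign bookkeeping: the antisymmetry $X_{\hat k}^{i,j} = -X_{\hat k}^{j,i}$, the skew-symmetry of $b_{ij}^k$ (hence of $G_{ij}$), and the precise sign $(-1)^{i+j+1}$ in the formula for $e_i^.e_j$ all enter the calculation, and one must fix orientations consistently so that the $d_2$-images cancel on the nose. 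Once this is done, the content of the lemma reduces cleanly to the antisymmetrization identity for (\ref{rel2}) and to the Laplace identity above for (\ref{rel1bis}), combined with the exactness of $\FF$.
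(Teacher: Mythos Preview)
Your argument is correct and, for relation~(\ref{rel2}), coincides with the paper's proof: apply $d_2$, watch the six $Y_\bullet Y_\bullet e_\bullet$ terms cancel, observe the expression lies in $\langle f_1,f_2,f_3\rangle$, and use $\ker d_2=\operatorname{im} d_3=\langle f_4,\ldots,f_{m+3}\rangle$ to force vanishing.

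For relation~(\ref{rel1bis}) there is a small difference worth noting. The paper does not run the kernel argument here; it simply cites Lemma~\ref{relat1}, whose proof is a direct coordinate identity among minors of $X$ (expand $\alpha_{ij}$ in the $f_k$-basis and match coefficients). Your approach instead treats (\ref{rel1bis}) by the same mechanism as (\ref{rel2}): push through $d_2$ and reduce to $\sum_a X_{ah}Y_a=0$, which is nothing but $d_1\circ d_2=0$. This is a mild but genuine unification---one technique handles both identities, and the Laplace/complex relation replaces the explicit minor manipulation. The trade-off is that the paper's route for (\ref{rel1bis}) is slightly more elementary (it never needs exactness of $\FF$, only the combinatorics of minors), whereas yours is cleaner conceptually and avoids any coordinate expansion of $\alpha_{ab}$.
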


\begin{proof}
Equation (\ref{rel1bis}) is exactly the same proved in Lemma \ref{relat1}. Equation (\ref{rel2}) can be easily proved showing that $d_2(Y_k(e_i^.e_j - d(G_{ij})) - Y_j(e_i^.e_k - d(G_{ik})) + Y_i(e_j^.e_k - d(G_{jk}))) = 0$ and hence this term can be lifted to $F_3$. Since all the terms involved in the equation have the coefficients with respect to $f_4, \ldots, f_{m+3}$ equal to zero, also their sum has to be zero. 
\end{proof}


Using the relation in Lemma \ref{relbis}, and working as for the split exact case, one can obtain $v^{(2)}_1$ and $v^{(1)}_1$ as follows: for $  h \leq 3 $
$e_i^.f_h= \sum_{r=1}^4 X_{rh}G_{ir},$ while, for $ h \geq 4$, $e_i^.f_h= Y_i g_{h-3}$. Furthermore, 
$e_i^.e_j^.e_k = Y_i G_{jk} - Y_j G_{ik} + Y_k G_{ij}.$ 

\medskip

Set $\varepsilon= e_1 \wedge e_2 \wedge e_3 \wedge e_4 $.
Computing $v^{(3)}_2$ using again the relations described in (\ref{v3,2}) we obtain 
$$ q(\varepsilon)= d(G_{12}) \otimes e_3^.e_4 - d(G_{13}) \otimes e_2^.e_4 + d(G_{14}) \otimes e_2^.e_3 +$$ $$ + d(G_{34}) \otimes e_1^.e_2 - d(G_{24}) \otimes e_1^.e_3 + d(G_{23}) \otimes e_1^.e_4 +$$
$$ - d(G_{12}) \otimes d(G_{34}) + d(G_{13}) \otimes d(G_{24}) - d(G_{14}) \otimes d(G_{23}). $$  


This term can be lifted in many different equivalent ways to the module $\bigwedge^2 F_3$ and
 $v^{(3)}_2 (\varepsilon)$ is given by any choice of such lift of $ q(\varepsilon) $ plus the term $d(C)$, defined in Definition \ref{cvar}. To present the map $v^{(3)}_{top} = v^{(3)}_2$, we choose the following lift: 
 
 \begin{prop}
 \label{hbm3}
 The map $v^{(3)}_{top}$ is obtained as 
 $$ v^{(3)}_2 (\varepsilon)=  G_{12} \otimes e_3^.e_4 - G_{13} \otimes e_2^.e_4 + G_{14} \otimes e_2^.e_3 + G_{34} \otimes e_1^.e_2 - G_{24} \otimes e_1^.e_3 + G_{23} \otimes e_1^.e_4 + $$
$$ + d(C)  - \frac{1}{2} [ G_{12} \otimes d(G_{34}) - G_{13} \otimes d(G_{24}) + G_{14} \otimes d(G_{23}) +  $$ $$  + d(G_{12}) \otimes G_{34} - d(G_{13}) \otimes G_{24} + d(G_{14}) \otimes G_{23}]. $$
 \end{prop}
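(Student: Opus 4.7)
The proposition simply selects one specific lift of the already-computed cycle $q(\varepsilon)$ out of the many valid ones, so the content of any proof is a routine verification that the displayed formula is indeed a lift, together with the observation that the term $d(C)$ parametrizes the full ambiguity in the choice of lift.

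The plan is as follows. Recall from the exact complex (\ref{v3,2}) that $v^{(3)}_2(\varepsilon)$ is obtained by lifting the cycle $q(\varepsilon)\in S_2 F_2$, which was written out just above the proposition, through the boundary $d_3\otimes 1\colon F_3\otimes F_2\to S_2 F_2$. The lift is unique modulo the image of the Koszul map $\bigwedge^2 F_3\to F_3\otimes F_2$, and this image is parametrized precisely by $d(C)$ with $C\in\bigwedge^2 F_3$ as in (\ref{cvar}). So it suffices to exhibit any single lift of $q(\varepsilon)$; the proposition then follows because the proposed formula differs from that lift only by an element of the ambiguity, which is absorbed into the explicit $d(C)$ summand.

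To carry out the verification I would split $q(\varepsilon)$ into two blocks. The first block consists of the six summands of shape $\pm d(G_{ij})\otimes e_k\cdot e_r$; each of these lifts tautologically to $\pm G_{ij}\otimes e_k\cdot e_r$ because applying $d_3\otimes 1$ converts $G_{ij}$ back to $d(G_{ij})$, reproducing exactly the first line of the stated formula. The second block consists of the three remaining summands $-d(G_{12})\otimes d(G_{34})+d(G_{13})\otimes d(G_{24})-d(G_{14})\otimes d(G_{23})$, in which the differential appears in both tensor factors. Since $q(\varepsilon)$ is viewed inside the symmetric product $S_2 F_2$, each such summand has two natural preimages in $F_3\otimes F_2$ (put $G$ in the first factor or in the second while keeping $d(G)$ in the other), and the symmetric average $-\frac{1}{2}[G_{ij}\otimes d(G_{kl})+d(G_{ij})\otimes G_{kl}]$ is the canonical choice; applying $d_3\otimes 1$ and symmetrizing returns precisely $-d(G_{ij})\cdot d(G_{kl})$ in $S_2 F_2$. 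Summing over the three pairs $(12,34),(13,24),(14,23)$ recovers the bracket in the formula.

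The main obstacle is essentially bookkeeping: one must track signs, match the Koszul symmetrization convention in $S_2 F_2$, and handle the identification between $F_3\otimes F_2$ and $F_2\otimes F_3$ that is implicit in writing $d(G_{ij})\otimes G_{kl}$ alongside $G_{ij}\otimes d(G_{kl})$ inside the symmetric bracket. Once these conventions are fixed and the explicit expressions for $e_i\cdot e_j$ from Lemma \ref{minorlemma} are substituted, the computation is entirely mechanical and confirms that the stated formula is a valid lift; adding the generic kernel element $d(C)$ produces the most general such lift, which is the claim.
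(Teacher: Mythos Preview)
Your proposal is correct and matches the paper's approach: the paper does not give a separate proof of this proposition at all, but simply displays the formula after remarking that $q(\varepsilon)$ can be lifted in many equivalent ways and that $d(C)$ records the ambiguity. Your write-up supplies exactly the routine verification the paper leaves implicit, splitting $q(\varepsilon)$ into the six ``mixed'' summands (lifted tautologically) and the three ``pure'' summands (lifted by the symmetric average), and you correctly flag the only real subtlety, namely the notational convention that lets $d(G_{ij})\otimes G_{kl}$ stand for the element of $F_3\otimes F_2$ with the tensor factors swapped.
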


\medskip

The maps $v^{(1)}_{n}$ are computed inductively as seen for the split exact case in Proposition \ref{splitm1}, but adjoining an extra term for any $n \geq 3$. 
  Indeed, now $v_2^{(3)}(\varepsilon)$ is not uniquely determined and such extra term represents this non-uniqueness and needs to be used in order to have a proper lift to $\bigwedge^{n} F_3$. This term is defined, if $n$ is odd, as $$\Gamma_n(e_i) :=  Y_i (G_{jk} \wedge G_{jl} \wedge G_{kl} \, \wedge C^{(s-2)}) \wedge [G_{il} \otimes e_j^.e_k - G_{ik} \otimes e_j^.e_l + G_{ij} \otimes e_k^.e_l] + $$ 
$$ Y_j (G_{kl} \wedge C^{(s-1)}) \wedge G_{ij}  \otimes e_k^.e_l  - Y_k (G_{jl} \wedge C^{(s-1)}) \wedge G_{ik} \otimes e_j^.e_l + Y_l (G_{jk} \wedge C^{(s-1)}) \wedge G_{il} \otimes e_j^.e_k]  $$ while if $n$ is even as $$ \Gamma_n(e_i) := Y_j (G_{ik} \wedge G_{il} \wedge C^{(s-2)}) \wedge (- G_{jl} \otimes e_i^.e_k  + G_{jk} \otimes e_i^.e_l) + $$ $$ -
   Y_k (G_{ij} \wedge G_{il} \wedge C^{(s-2)}) \wedge ( G_{kl} \otimes e_i^.e_j  + G_{jk} \otimes e_i^.e_l) + $$ $$  Y_l (G_{ij} \wedge G_{ik} \wedge C^{(s-2)}) \wedge ( G_{kl} \otimes e_i^.e_j  - G_{jl} \otimes e_i^.e_k). $$ 
 
 \begin{prop}
 \label{hbm1}
 Define $ \Gamma_n(e_i) $ as above and $$ q^{(1)}_{n} := v^{(1)}_{n-1} \otimes v^{(3)}_{1} - v^{(1)}_{n-2} \wedge v^{(3)}_{2}. $$ 
 The map $v^{(1)}_{n}$ is obtained by lifting to $\bigwedge^n F_3$ the image of the map $ q^{(1)}_{n}(e_i) + \Gamma_n(e_i), $ and
for $n=2s$, $$  v^{(1)}_{n}(e_i) = Y_i C^{(s)} + Y_j (G_{ik} \wedge G_{il} \wedge C^{(s-1)}) - Y_k (G_{ij} \wedge G_{il} \wedge C^{(s-1)}) + Y_l (G_{ij} \wedge G_{ik} \wedge C^{(s-1)}) $$ while 
for $n=2s+1$, $$  v^{(1)}_{n}(e_i) =  Y_i (G_{jk} \wedge G_{jl} \wedge G_{kl} \wedge C^{(s-1)}) + Y_j (G_{kl} \wedge C^{(s)}) - Y_k (G_{jl} \wedge C^{(s)}) + Y_l (G_{jk} \wedge C^{(s)}). $$ 
In particular for $n \geq 4$, $ v^{(1)}_{n}(e_i) =  v^{(1)}_{n-2}(e_i) \wedge C.$
 \end{prop}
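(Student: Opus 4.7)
The plan is to proceed by induction on $n$, with base cases $v^{(1)}_1$ and $v^{(1)}_2$ already computed in the preceding paragraphs (the multiplication structure gives $v^{(1)}_1$, and $v^{(1)}_2$ is obtained as the lift of $q^{(1)}_2 + \Gamma_2$, which can be carried out explicitly in low degree). The core of the argument is identical in spirit to Proposition \ref{splitm1}, but corrected by the extra $\Gamma_n(e_i)$ term which compensates for the non-uniqueness of the lift defining $v^{(3)}_2$ described in Proposition \ref{hbm3}.

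First I would show that the image of $q^{(1)}_n(e_i) + \Gamma_n(e_i)$ lies in the kernel of $1 \otimes d_2 : \bigwedge^{n-1} F_3 \otimes F_2 \to \bigwedge^{n-1} F_3 \otimes F_1$, and therefore, by exactness of the Koszul-type complex induced by $d_3$, lies in the image of $1 \otimes d_3$ and thus lifts to $\bigwedge^n F_3$. This kernel check reduces, after applying the Leibniz rule and the inductive formulas, to an identity among the terms of the form $Y_i(e_j^.e_k) - Y_j(e_i^.e_k) + Y_k(e_i^.e_j)$, which is precisely the content of equation (\ref{rel2}) in Lemma \ref{relbis}; this identity is what forces the extra symmetrizing summand $\Gamma_n(e_i)$ to appear, because the two possible lifts of $v^{(3)}_2(\varepsilon)$ differ exactly by multiples of the left-hand side of (\ref{rel2}).

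Next I would verify that the closed-form expression for $v^{(1)}_n(e_i)$ given in the statement really is a lift of $q^{(1)}_n(e_i) + \Gamma_n(e_i)$. Applying the differential (which acts on the $\bigwedge^n F_3$ factor via $d_3$ and the Leibniz rule) to the proposed formula, one uses $d(C^{(s)}) = d(C) \wedge C^{(s-1)}$ from Definition \ref{highC}, the formula for $e_i^.e_j$ in terms of the minors $X_{\hat{k}}^{\hat{i}\hat{j}}$ and $d(G_{ij})$, and the inductive expressions for $v^{(1)}_{n-1}$ and $v^{(1)}_{n-2}$. Grouping terms by the wedge factor of the form $G_{ab} \wedge \cdots$ or $C^{(s-1)} \wedge d(C)$, and repeatedly applying (\ref{rel1bis}) and (\ref{rel2}) from Lemma \ref{relbis}, one recovers precisely $q^{(1)}_n(e_i) + \Gamma_n(e_i)$. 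Finally, the identity $v^{(1)}_n(e_i) = v^{(1)}_{n-2}(e_i) \wedge C$ is an immediate consequence of the explicit formulas by comparing the $n$ and $n-2$ cases and using $C^{(s)} = C^{(s-1)} \wedge C$ up to the exactness correction.

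The main obstacle is the bookkeeping in the second step: there are many cross-terms coming from $v^{(1)}_{n-1} \otimes v^{(3)}_1$ (which contain the minor contributions to $e_i^.e_j$) against $v^{(1)}_{n-2} \wedge v^{(3)}_2$ (which contain both the lifted combinatorial part and the $d(C)$ contribution), and one must show that the discrepancy between the two reasonable choices of $v^{(3)}_2$ is exactly absorbed by $\Gamma_n(e_i)$. This is where the parity splitting into the two formulas for $\Gamma_n(e_i)$ becomes essential, since the wedge patterns that appear in the even and odd cases differ in which pair of indices is contracted by the relation (\ref{rel2}). Once this matching is established, the induction closes and the proposition follows.
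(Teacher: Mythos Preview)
Your proposal is correct and follows essentially the same approach as the paper: the paper's proof is a one-line reference to Proposition \ref{splitm1} ``with repeated application of relation (\ref{rel2})'', and you have spelled out exactly that inductive scheme, correctly identifying (\ref{rel2}) of Lemma \ref{relbis} as the mechanism that forces the correction term $\Gamma_n(e_i)$. Your account is considerably more detailed than the paper's, but the underlying strategy is the same.
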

 
\begin{proof}
This can be proved along the lines of Proposition \ref{splitm1}, with repeated application of relation (\ref{rel2}).
\end{proof}

Proceeding inductively  along the lines of Proposition \ref{splitm2}, and using repeatedly the relations in Lemma \ref{relbis}, all the maps $v^{(2)}_{n}$ can be described. 

\begin{prop}
\label{hbm2}
 The map $v^{(2)}_{n}$ is obtained by lifting to $\bigwedge^n F_3$ the image of the map $$ q_n :=  v^{(2)}_{n-1} \otimes v^{(3)}_1 -  v^{(2)}_{n-2} \wedge v^{(3)}_{2} + v^{(1)}_{n-1} \otimes 1_{F_2}. $$  Let $ \lbrace i,j,k,l \rbrace = \lbrace 1,2,3,4 \rbrace $ and suppose $j < k < l$.
  Then,
 for $n=2s$ we have, for $h \leq 3$:
$$ v^{(2)}_{n}(e_i \otimes f_h) = X_{ih} C^{(s)}+ X_{jh} (C^{(s-1)} \wedge G_{jl} \wedge G_{jk})+ $$ $$ - X_{kh} (C^{(s-1)} \wedge G_{kj} \wedge G_{kl}) + X_{lh} ( C^{(s-1)} \wedge G_{lk} \wedge G_{lj}), $$
and for $h \geq 4$
$$ v^{(2)}_n(e_i \otimes f_h) = Y_i ( C^{(s-2)} \wedge g_{h-3} \wedge G_{kl} \wedge G_{jl} \wedge G_{jk})  + Y_{j} ( C^{(s-1)} \wedge g_{h-3} \wedge G_{kl}) + $$ $$ - Y_{k} ( C^{(s-1)} \wedge g_{h-3} \wedge G_{jl}) + Y_{l} ( C^{(s-1)} \wedge g_{h-3} \wedge G_{jk}). $$ 
For $n=2s+1$ we have, for $h \leq 3$:
$$ v^{(2)}_n(e_i \otimes f_h) = X_{ih} ( C^{(s-1)} \wedge G_{ij} \wedge G_{ik} \wedge G_{il}) + X_{jh} (C^{(s)} \wedge G_{ij}) - X_{kh} (C^{(s)} \wedge G_{ik}) + X_{lh} (C^{(s)} \wedge G_{il}), $$
and for $h \geq 4$:
$$ v^{(2)}_n(e_i \otimes f_h) = Y_{i} (C^{(s)} \wedge g_{h-3}) + Y_j (C^{(s-1)} \wedge g_{h-3} \wedge G_{il} \wedge G_{ik}) + $$ $$- Y_k ( C^{(s-1)} \wedge g_{h-3} \wedge G_{il} \wedge G_{ij}) + Y_l ( C^{(s-1)} \wedge g_{h-3} \wedge G_{ik} \wedge G_{ij}). $$ 
  \end{prop}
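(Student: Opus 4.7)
The plan is to mirror the inductive strategy of Propositions \ref{splitm2} and \ref{hbm1}, performing induction on $n$ with the formulas for $v^{(2)}_1$ (and the explicit $v^{(3)}_2$ computed above) serving as the base cases. Throughout the induction we will treat the four sub-cases separately: $n$ even vs.\ $n$ odd, and $h\leq 3$ (so that $f_h$ couples to the Hilbert--Burch matrix $X$) vs.\ $h\geq 4$ (so that $f_h$ sits in the identity block).

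First I would verify the cycle property: the image of $q_n$ lies in $\ker(1_{\bigwedge^{n-1}F_3}\otimes d_2)$ and hence, by exactness of the Koszul-like sequence $0 \to \bigwedge^n F_3 \to \bigwedge^{n-1}F_3 \otimes F_2 \to \bigwedge^{n-1}F_3 \otimes F_1$, lifts to $\bigwedge^n F_3$. This is the standard Bruns-cycle verification: applying $1\otimes d_2$ to $q_n$ and using the Leibniz rule together with the inductive identities $d\circ v^{(2)}_{n-1} = v^{(1)}_{n-1}\otimes d_1 + (\text{correction from } v^{(3)})$ and $d\circ v^{(1)}_{n-1} = v^{(1)}_{n-2}\wedge d(C)$ (from Proposition \ref{hbm1}) makes all terms cancel. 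The definition of $q_n$ itself is designed so that the three summands pairwise neutralize the failures of the lower maps to be chain maps.

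Next, substituting the inductive formulas for $v^{(2)}_{n-1}$ and $v^{(2)}_{n-2}$ into $q_n(e_i\otimes f_h)$ and expanding via $e_a^.e_b = (-1)^{a+b+1}\sum_k X_{\hat{k}}^{\hat{a}\hat{b}} f_k + d(G_{ab})$, one collects terms of three types: those carrying the $X_{rh}$ factors (active only when $h\leq 3$), those carrying $Y_r$ factors (active only when $h\geq 4$, where $e_r^.f_h = Y_r g_{h-3}$), and wedge-products of $G_{ab}$'s and $C^{(s)}$'s. For $h\leq 3$ one invokes relation (\ref{rel1bis}) of Lemma \ref{relbis} to consolidate the combinations $X_{jh}G_{ji}+X_{kh}G_{ki}+X_{lh}G_{li}$ into the predicted coefficient of $C^{(s)}$ with leading term $X_{ih}$. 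For $h\geq 4$ one uses relation (\ref{rel2}) repeatedly to rewrite the alternating sums $Y_iG_{jk}-Y_jG_{ik}+Y_kG_{ij}$ so that the $Y_i$-coefficient acquires the extra $G_{kl}\wedge G_{jl}\wedge G_{jk}$ (or $G_{il}\wedge G_{ik}\wedge G_{ij}$) factor required by the formula. The lift from $\bigwedge^{n-1}F_3\otimes F_2$ to $\bigwedge^n F_3$ is then read off using the Leibniz rule $d(C^{(s)}) = d(C)\wedge C^{(s-1)}$ of Definition \ref{highC}, converting every factor $d(C)$ appearing after expansion into one extra exterior power of $C$.

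The main obstacle is purely combinatorial rather than conceptual: each case produces a sum of many tensor monomials whose signs depend on the chosen ordering $j<k<l$ of the complementary indices and on parities of permutations inside the wedge, and one must fix a convention for the (non-unique) lift so that the final expression takes the symmetric form claimed. Because of this non-uniqueness, precisely as in Proposition \ref{hbm3}, any two candidate lifts differ by an element of $\bigwedge^{n}F_3$ that can be absorbed into the $C^{(s)}$-term; choosing the canonical lift that symmetrizes between the $j,k,l$ contributions yields exactly the formulas in the statement. No new algebraic input beyond Lemmas \ref{minorlemma} and \ref{relbis} and the inductive hypothesis is needed; the proof amounts to the bookkeeping sketched above, and we omit the long routine expansion.
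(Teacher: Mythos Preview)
Your proposal is correct and follows essentially the same approach as the paper: the paper's own proof is only the one-line remark that one proceeds inductively along the lines of Proposition~\ref{splitm2}, using repeatedly the relations in Lemma~\ref{relbis}, and omits the detailed computation. Your outline supplies exactly this missing routine (the Bruns-cycle check, the inductive substitution, the use of relations (\ref{rel1bis}) and (\ref{rel2}), and the lift via Definition~\ref{highC}), so there is nothing to add.
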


Putting together Propositions \ref{hbm3},\ref{hbm1} and \ref{hbm2} and computing the explicit entries of the maps one gets the differentials described in Theorem \ref{thhbm}.


\section{Open subsets of $Spec ({\hat R}_{gen})$}\label{sec:usplit}

The first application of the complexes described in Theorems \ref{thhbn} and \ref{thhbm} is a proof that the open sets $U_{CM}$ and $U_{split}$ of the spectrum of generic ring ${\hat R}_{gen}$ are the same.

Let us consider the spectrum $Spec({\hat R}_{gen})$ of the generic ring constructed in \cite{W18}. We assume we are in Dynkin format for which the complex $\FF^{top}_\bullet$ exists
(i.e. we exclude formats $(1,n,n,1)$ for $n$ odd and the format $(2,5,5,2)$). The space  $Spec({\hat R}_{gen})$ contains two interesting open subsets.
The subset $U_{CM}$ is the set of prime ideals $P$ for which the localization of the dual complex $(\FF^{gen}_\bullet )^*$ at $P$ is acyclic, i.e. the points where the localization of the generic module $H_0 (\FF^{gen}_\bullet)$ is perfect.

There is also another open set $U_{split}$ which consists of points for which the complex $\FF^{top}_\bullet$ is split exact.
Conjecturally these two open sets are equal, see Conjecture $7.4$ in \cite{LW19}.

Here we prove this conjecture for $D_n$ type formats. This result was proved in \cite{CVW18} and it is not difficult to see using the structure of perfect ideals of codimension $3$ with resolutions of these formats and linkage. The point is that our proof is independent of linkage arguments, so it is likely to generalize to other Dynkin formats.

\begin{thm} \label{Ucm-Usplit}
For the formats $(1,n,n,1)$ ($n$ even) and $(1,4,n,n-3)$, the sets $U_{CM}$ and $U_{split}$ are equal.
\end{thm}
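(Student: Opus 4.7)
The plan is to establish the two inclusions $U_{split}\subseteq U_{CM}$ and $U_{CM}\subseteq U_{split}$ separately, exploiting the explicit descriptions of $\FF^{top}_\bullet$ given in Theorems \ref{thhbn} and \ref{thhbm} together with classical pfaffian identities.

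The inclusion $U_{split}\subseteq U_{CM}$ is relatively direct. Let $\mathfrak{p}\in U_{split}$, so that $\FF^{top}_\bullet$ is split exact at $\mathfrak{p}$. Since $\FF^{top}_\bullet$ is already acyclic over $R$ and resolves $R/J$, where $J$ is generated by the entries of $v^{(1)}_{top}$, split exactness at $\mathfrak{p}$ is equivalent to $J\not\subseteq \mathfrak{p}$. Inspection of the formulas of Theorems \ref{thhbn} and \ref{thhbm} shows that each generator of $J$ (the $u_i$ or $w_i$) is an $R$-linear combination of the $Y_j$, so $J\subseteq I:=(Y_1,\ldots,Y_{r_1})$. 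Therefore $I\not\subseteq \mathfrak{p}$, $I R_\mathfrak{p}=R_\mathfrak{p}$, and $(H_0(\FF^{gen}_\bullet))_\mathfrak{p}=0$ is trivially perfect. Hence $\mathfrak{p}\in U_{CM}$.

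For the reverse inclusion $U_{CM}\subseteq U_{split}$, I would argue by contrapositive: assuming $J\subseteq\mathfrak{p}$, I aim to show that $(\FF^{gen}_\bullet)^*$ fails to be acyclic at $\mathfrak{p}$. For the format $(1,n,n,1)$, the system $u_i=\sum_j(-1)^{i+j}Y_jP_{\hat{i},\hat{j}}\in\mathfrak{p}$ has matrix form $\widetilde{P}\cdot Y\in\mathfrak{p}^n$, where $\widetilde{P}_{ij}=(-1)^{i+j}P_{\hat{i},\hat{j}}$. The classical pfaffian adjugate identity $\widetilde{P}\cdot B=P\cdot I_n$ (with $B=(b_{ij})$ and $P$ the total pfaffian) yields $\det(\widetilde{P})=P^{n-2}$, and multiplying $u=\widetilde{P}Y$ by $\mathrm{adj}(\widetilde{P})$ gives $P^{n-2}\cdot I\subseteq J$. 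Hence $J\subseteq\mathfrak{p}$ forces either $I\subseteq\mathfrak{p}$ or $P\in\mathfrak{p}$; one treats each case separately, using the Buchsbaum-Eisenbud acyclicity criterion for $(\FF^{gen}_\bullet)^*$ to show that the grade of the relevant ideal of minors of $d_1^*$ or $d_2^*$ drops at $\mathfrak{p}$, so that $\mathfrak{p}\notin U_{CM}$. For the format $(1,4,n,n-3)$, the analogous strategy applies with the scalar pfaffian identity replaced by the mixed-pfaffian identities linking the $b_{ij}^k$ and $c_{ut}$ variables, and with the even and odd cases of $m$ handled separately as in Theorem \ref{thhbm}.

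The principal obstacle is the second direction: converting the algebraic vanishing $J\subseteq \mathfrak{p}$ into a concrete failure of the grade conditions for $(\FF^{gen}_\bullet)^*$. The pfaffian identities provide the essential bridge for $(1,n,n,1)$, but for $(1,4,n,n-3)$ the bookkeeping is considerably more delicate, since one must track the interplay of the two families of defect variables and the change of structure between even and odd $m$. In particular, the mixed-pfaffian expressions $P_{u,t}$ from Definition \ref{minors-pfaffians} do not combine into a single clean adjugate identity, so a more refined analysis of the minors of $v^{(1)}_{top}$ in each parity case will be required.
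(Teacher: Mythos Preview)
Your approach differs fundamentally from the paper's and contains a genuine gap in the contrapositive direction $U_{CM}\subseteq U_{split}$. The decisive obstruction is that on the slice $R$ the differentials $d_1,d_2,d_3$ of $\FF^{gen}_\bullet$ involve only the Hilbert--Burch variables $X_{ij}$ and \emph{not} the defect variables $b_{ij}$ (or $c_{ut}$); consequently the Buchsbaum--Eisenbud ideals $I(d_i^*)$ are completely insensitive to whether $P\in\mathfrak{p}$. In your dichotomy ``$I\subseteq\mathfrak{p}$ or $P\in\mathfrak{p}$'' the second branch therefore cannot be closed: take for instance $\mathfrak{p}=(b_{ij})$. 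Every $u_i$ lies in $\mathfrak{p}$ (each $P_{\hat{i},\hat{j}}$ has positive degree in the $b$'s) while no $Y_j$ does, so $J\subseteq\mathfrak{p}$ and $I\not\subseteq\mathfrak{p}$. Then some $Y_j$ is a unit in $R_\mathfrak{p}$, the localized $\FF^{gen}_\bullet$ is split exact, and $(\FF^{gen}_\bullet)^*_\mathfrak{p}$ is trivially acyclic---so $\mathfrak{p}\in U_{CM}$, the opposite of what you need. No pfaffian adjugate identity can repair this, because the minors whose grade you propose to lower simply do not see the $b_{ij}$. The same obstruction applies verbatim to the $(1,4,n,n-3)$ format.

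The paper circumvents this by two structural ingredients you omit. First, a codimension argument in $\mathrm{Spec}(\hat R_{gen})$: the components of the complement of $U_{CM}$ have codimension~$2$ while $V(I(\partial_3))$ has codimension~$3$, so every such component meets the open set $U(I(\partial_3))$ and it suffices to verify equality there. Second, and crucially, the highest--lowest weight automorphism $\psi$ of $\hat R_{gen}$, which interchanges $\FF^{gen}_\bullet$ and $\FF^{top}_\bullet$. Applying $\psi$ converts the problem on $U(I(\partial_3))$ into the question ``on $U(I(d_3))$, where is $(\FF^{top}_\bullet)^*$ acyclic, and where is $\FF^{gen}_\bullet$ split exact?''---and \emph{both} of these loci, by the explicit computations of Theorems~\ref{thhbn} and~\ref{thhbm}, are exactly where the Hilbert--Burch ideal $I=(Y_1,\dots,Y_{r_1})$ becomes the unit ideal. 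The symmetry $\psi$ is precisely the mechanism that makes the defect variables relevant to the Cohen--Macaulay side of the comparison; a direct attack on the slice, without it, cannot succeed.
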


\begin{proof} We work with generic ring ${\hat R}_{gen}$ either for the format $(1,n,n,1)$ ($n$ even) or for the format $(1,4,n,n-3)$. Let us denote by $d_1, d_2, d_3$ the differentials in $\FF^{gen}_\bullet$ and by $\partial_1, \partial_2, \partial_3$ the differentials in $\FF^{top}_\bullet$. The ideals $I(d_i)$ (resp. $I(\partial_i)$) denote the ideals of $r_i\times r_i$ minors of these differentials, where $r_i$ is the rank of $d_i$ and $\partial_i$. We denote by $a_2$ the Buchsbaum-Eisenbud multiplier map from ${\hat R}_{gen} \to \bigwedge^{r_2}F^{gen}_1 $ defined in \cite{BE74}. We also denote by $I(a_2)$ the ideal generated by the coordinates of $a_2$.

 
The complement of the set $U_{CM}$ consists of points where the depth of the localization of $I(a_2)$ is at least $3$.
Let us assume that $P_1,\ldots ,P_s$ are all prime ideals of height $2$ containing $I(a_2)$. Their zero sets $Y_1,\ldots ,Y_s$ are all components of codimension $2$ in the complement of $U_{CM}$.
Consider the open set $U(I(\partial_3))$ of $Spec({\hat R}_{gen})$ which is a complement of the zero set $V(I(\partial _3))$ of $I(\partial_3)$. Since $I(\partial_3)$ has depth $3$ in ${\hat R}_{gen}$ the components of $V(I(\partial_3))$ have codimension $3$. Thus the components $Y_1,\ldots ,Y_s$ have to intersect $U(I(\partial_3))$. So we need to see that our equality $U_{CM}=U_{split}$ holds only in points of $U(I(\partial_3 ))$.

Now we use the highest-lowest weight symmetry of ${\hat R}_{gen}$. This ring has an automorphism $\psi :{\hat R}_{gen}\rightarrow {\hat R}_{gen}$ taking differentials of $\FF^{gen}_\bullet$ to differentials of $\FF^{top}_\bullet$.
So the points in $U(I(\partial_3))$ are exactly the points where the complex $\FF^{top}_\bullet$ is a direct sum of a Hilbert-Burch type complex and a split complex. By symmetry $\psi (U(I(\partial_3))$ is $U(I(d_3))$. So we need to analyze the points in $U(I(d_3))$ where the complex $\FF^{top}_\bullet$ resolves Cohen-Macaulay module. But a points in $U(I(d_3))$ are exactly the points where the complex $\FF^{gen}_\bullet$ is of the form given by a direct sum of an Hilbert-Burch complex and a split summand. So we need to see where the complex $\FF^{top}_\bullet$ resolves a Cohen-Macaulay module for the original complex being a direct sum of an Hilbert-Burch complex and an identity map. 
But this is done in previous sections (Theorem \ref{thhbn} for the $(1,n,n,1)$ format and Theorem \ref{thhbm} for the $(1,4,n,n-3)$ format), and it is clear from these results that both complexes $\FF^{top}_\bullet$ are acyclic together with both duals precisely at points where the maximal minors of the Hilbert-Burch matrix become a unit ideal, which correspond to the points in the set $U_{split}$.
\end{proof}

\section{Other applications of the Hilbert-Burch top complex}\label{sec:examples}

In this last section we provide concrete examples of free resolutions of ideals in commutative rings that can be obtain by specializing the complexes described in Theorems \ref{thhbn} and \ref{thhbm}.

For an $n \times n-1$ matrix $M$, denote by $ M_{\hat{k}}^{\hat{i},\hat{j}} $ the minor of $M$ obtained by deleting the rows $i,j$ and the column $k$.

\begin{thm}
\label{appln}
Let $n=2s \geq 4$. Consider a perfect ideal of height two $I=(f_1, \ldots, f_n)$ in a commutative local Noetherian ring $R$ and let $M=(m_{ij})$ be the Hilbert-Burch matrix resolving it. Let $Z$ be an indeterminate over the ring $R$. For every $1 \leq k < r \leq n$, the ideal $$\mathcal{I}= (f_k,f_r) + \sum_{i \neq k,r} (Zf_i) \subseteq R[Z]$$ is resolved by a complex $\FF$ of format $(1,n,n,1).$ 
Moreover, the following are equivalent:
\begin{enumerate}
\item $\FF$ is a minimal free resolution of $\mathcal{I}$.
\item The ideal $(M_{\hat{1}}^{\hat{k},\hat{r}}, \ldots, M_{\hat{n-1}}^{\hat{k},\hat{r}})$ generated by the submaximal minors of $M$ associated to the choice of indices $k,r$, has depth 2.
\end{enumerate}
\end{thm}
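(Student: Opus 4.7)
The plan is to realize $\FF$ as a specialization of the universal complex $\FF^{top}_\bullet$ of Theorem~\ref{thhbn}. Since $n$ is even, fix a perfect matching $\mu$ of $\{1,\ldots,n\}\setminus\{k,r\}$ and define a ring homomorphism $\phi \colon R_0[b_{ij}] \to R[Z]$ by $X_{ij} \mapsto m_{ij}$ (so that $Y_i \mapsto \pm f_i$), $b_{kr} \mapsto Z$, $b_{ab} \mapsto 1$ for each $\{a,b\} \in \mu$, and $b_{ij} \mapsto 0$ otherwise. Take $\FF := \FF^{top}_\bullet \otimes_\phi R[Z]$. Under $\phi$, the only nonzero entries of the skew matrix $B$ lie along the configuration $\mu\cup\{\{k,r\}\}$, so a combinatorial check shows that $\phi(P_{\hat{i},\hat{j}})$ is nonzero only when $\{1,\ldots,n\}\setminus\{i,j\}$ is itself covered by a sub-matching; the nonzero cases are $\phi(P_{\hat{k},\hat{r}}) = \pm 1$ and $\phi(P_{\hat{a},\hat{b}}) = \pm Z$ for $\{a,b\}\in\mu$. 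Substituting into $u_i = \sum_{j\ne i}(-1)^{i+j} Y_j P_{\hat{i},\hat{j}}$ yields $\phi(u_k) = \pm f_r$, $\phi(u_r) = \pm f_k$, and $\phi(u_a) = \pm Zf_b$ whenever $\{a,b\}\in\mu$, so $\phi(J) = \mathcal{I}$ and $\FF$ is a complex of format $(1,n,n,1)$ over $R[Z]$ with $H_0 = R[Z]/\mathcal{I}$.

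For acyclicity I would apply the Buchsbaum--Eisenbud criterion. The entries of $\phi(v^{(3)}_{top})$ are $\phi(v_h) = \pm N_h + Z\cdot(\text{other submaximal minors of }M)$ and $\phi(P) = \pm Z$, so
$$I(\partial_3) = (Z, N_1, \ldots, N_{n-1}) \subset R[Z].$$
Since $Z$ is a nonzerodivisor and $R[Z]/(Z)\cong R$, one has $\mathrm{depth}\,I(\partial_3) = 1 + \mathrm{depth}(N_1,\ldots,N_{n-1})$, so the top depth condition $\mathrm{depth}\,I(\partial_3)\ge 3$ is equivalent to condition~(2). Coupled with routine checks that $\mathrm{depth}\,I(\partial_2)\ge 2$ (from the explicit form of $\phi(v^{(2)}_{top})$, whose entries are $\pm m_{ij}$, $\pm Zm_{ij}$ and $\pm f_j$) and that $\mathcal{I}$ has grade at least $2$, Buchsbaum--Eisenbud gives that $\FF$ resolves $R[Z]/\mathcal{I}$ exactly when (2) holds.

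Finally, every entry of $\phi(v^{(2)}_{top})$ and $\phi(v^{(1)}_{top})$ lies in the maximal ideal $\mathfrak{m}R[Z]+(Z)$ because $M$ is a minimal Hilbert--Burch matrix and all $f_j$ are maximal minors of $M$; the only entries of $\phi(v^{(3)}_{top})$ that can fail to lie in the maximal ideal are the $\phi(v_h)=\pm N_h + Z(\ldots)$, whose images modulo $(\mathfrak{m},Z)$ are $\pm\overline{N_h}$. Hence $\FF$ is a minimal free resolution iff it is acyclic and all $N_h\in\mathfrak{m}$, which combined with the depth analysis is exactly condition~(2). The main obstacle I anticipate is the intermediate depth computation for $I(\partial_2)$: the specialized middle matrix mixes $Z$-free and $Z$-linear entries, and its $(n-1)\times(n-1)$ minors must be shown to have depth at least $2$; this reduces, after factoring out appropriate powers of $Z$, to depth properties of minors of the Hilbert--Burch matrix $M$ itself.
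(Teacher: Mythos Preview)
Your proposal is correct and follows essentially the same route as the paper: the same specialization of $\FF^{top}_\bullet$ via a perfect matching of $\{1,\ldots,n\}$ sending $b_{kr}\mapsto Z$, the matched $b_{ab}\mapsto 1$, and the rest to $0$; the same Pfaffian computation; and the same Buchsbaum--Eisenbud reduction of the equivalence to $\mathrm{depth}(Z,N_1,\ldots,N_{n-1})\ge 3$ in $R[Z]$. The obstacle you anticipate for $I(\partial_2)$ is exactly what the paper addresses by writing out the rows of the specialized $v^{(2)}_{top}$ explicitly: every $(n-1)\times(n-1)$ minor obtained by deleting the last column lies in $ZI$, while if some $N_h\neq 0$ there are minors involving the last column not contained in $(Z)$, which is enough to force the required depth.
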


\begin{proof}
Consider the complex $\FF^{top}_\bullet$ obtained in Theorem \ref{thhbn}. Specializing the entries of the generic matrix $X$ to the entries of $M=(m_{ij})$, one gets, by the Hilbert-Burch Theorem, that the maximal minor $Y_j$ specializes to the generator $f_j$ of $I$. Write a partition $$\lbrace 1,2,\ldots, n \rbrace = \lbrace k_1,r_1 \rbrace \cup \lbrace k_2, r_2 \rbrace \cup \ldots \cup \lbrace k_s, r_s \rbrace$$ such that $k_1=k$,$r_1=r$ and $\lbrace k_i,r_i \rbrace \cap \lbrace k_j,r_j \rbrace = \emptyset$ for every $i \neq j$.
Specialize the defect variables $b_{ij}$ the following way:
$b_{kr} = Z$, $b_{k_hr_h}= 1$ for every $h=2, \ldots, s$, and $b_{ij}=0$ in all the other cases. Hence, the pfaffians in the variables $b_{ij}$, specialize as $P=Z$ and \[P_{\hat{i},\hat{j}}= \left\{ \begin{array}{cc} Z &\mbox{if }i = k_h,j= r_h \mbox{ for } h = 2,\ldots,s \\
1&\mbox{if } i=k, j=r \\
0&\mbox{otherwise. } \\
\end{array}\right.  \]
It follows that the ideal of $v^{(1)}_{top}$ specializes to $\mathcal{I}$ and it admits a free resolution $\FF$ of length 3 of format $(1,n,n,1)$ given by the specialization of the complex $\FF^{top}_\bullet$. 
We describe, up to changing signs, the rows $R_j$ of the matrix of the second differential of $\FF$. We have: 

$$ R_{k}= \bmatrix Zm_{r1} & \ldots & Zm_{r_{n-1}} & f_k \endbmatrix, R_r = \bmatrix Zm_{k1} & \ldots & Zm_{k,n-1} & f_r \endbmatrix $$ and, for $h=2,\ldots,s$, $$ R_{k_h}= \bmatrix m_{r_h1} & \ldots & m_{r_{h},n-1} & f_{k_h} \endbmatrix, R_{r_h}= \bmatrix m_{k_h1} & \ldots & m_{k_{h},n-1} & f_{r_h} \endbmatrix.$$

The entries of the third differential of $\FF$ are: for $j \leq n-1$, $$ v_j= M_{\hat{j}}^{\hat{k},\hat{r}} + Z \left( \sum_{h=2}^{s} (-1)^{k_h+r_h} M_{\hat{j}}^{\hat{k_h},\hat{r_h}} \right),  v_n=Z. $$

Clearly $\FF$ is a complex and is easy to observe that the second differential of $\FF$ has rank $n-1$. Moreover, all its maximal minors obtained by deleting the last column are contained in the ideal $ZI$, which is contained in only one principal ideal of $R[Z]$, namely $ZR[Z].$ But, if at least one of the minors $M_{\hat{j}}^{\hat{k},\hat{r}} \neq 0$, the maximal minors involving the last column are not contained in $ZR[Z]$, and hence the ideal of maximal minors of the second differential has depth at least two.

Now, $\FF$ is a minimal free resolution of $\mathcal{I}$ if and only if the ideal generated by the entries of the third differential has depth at least 3. Going modulo the last entry $v_n= Z$, we get that this is equivalent to have the ideal 
$(M_{\hat{1}}^{\hat{k},\hat{r}}, \ldots, M_{\hat{n-1}}^{\hat{k},\hat{r}})$ having depth 2.
\end{proof}

\begin{example}
Choosing, in the setting of the above theorem, $M=X$ to be a generic matrix, the depth condition on submaximal minors is satisfied for any choice of indices $k,r$. Setting $Y_1, \ldots, Y_n$ to be the maximal minors of $X$, we get that the ideal $(Y_1,Y_2)+Z(Y_3, \ldots, Y_n)$ has a minimal free resolution of length 3 and format $(1,n,n,1)$.

Another specific example is obtained starting by the ideal of maximal minors of the matrix $$ \bmatrix x & 0 & 0 \\ 0 & y & 0 \\ v & 0 & w \\ a & b & c     \endbmatrix.$$ In this case, the previous theorem implies that the ideal $(Zxyw, Zxyc, y(cv-wa),xbw)$ has a minimal free resolution of format $(1,4,4,1)$.
\end{example}

To give a similar application describing some free resolution of format $(1,4,5,2)$, we consider the complex $\FF^{top}_\bullet$ of Theorem \ref{thhbm} for $m=2$. In this case $\FF^{top}_\bullet$ resolves the ideal $ J= (w_1,w_2,w_3,w_4),  $ where $$ w_i=Y_ic_{12} + Y_jB^{12}_{ik,ir}-Y_kB^{12}_{ij,ir}+Y_rB^{12}_{ij,ik}. $$ 
 For an $4 \times 3$ matrix $M$, denote by $ M_{\hat{k}}^{i,j} $ the $2 \times 2$ minor of $M$ obtained by selecting the rows $i,j$ and removing the column $k$.


\begin{thm}
\label{appl5}
Let $I=(f_1, f_2,f_3, f_4)$ be a perfect ideal of height two in a commutative local Noetherian ring $R$ and let $M=(m_{ij})$ be the Hilbert-Burch matrix resolving it. Let $Z_1,Z_2$ be indeterminates over the ring $R$. Then, the ideal $$\mathcal{J}= (Z_1Z_2f_4,Z_1f_3,Z_2f_2,f_1) \subseteq R[Z_1,Z_2]$$ is resolved by a complex $\FF$ of format $(1,4,5,2).$ 
Moreover, the following are equivalent:
\begin{enumerate}
\item $\FF$ is a minimal free resolution of $\mathcal{I}$.
\item The ideals of minors of $M$, $(M_{\hat{1}}^{2,4},M_{\hat{2}}^{2,4} , M_{\hat{3}}^{2,4})$ and $(M_{\hat{1}}^{3,4},M_{\hat{2}}^{3,4} , M_{\hat{3}}^{3,4})$ are nonzero and at least one of them has depth 2.
\end{enumerate}
\end{thm}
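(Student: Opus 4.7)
The plan is to mirror the strategy of Theorem \ref{appln}, using the complex $\FF^{top}_\bullet$ constructed in Theorem \ref{thhbm} for the case $m=2$. Recall that in that case $\FF^{top}_\bullet$ resolves the ideal $J=(w_1,w_2,w_3,w_4)$ with
$$w_i = Y_i c_{12} + Y_j B^{12}_{ik,ir} - Y_k B^{12}_{ij,ir} + Y_r B^{12}_{ij,ik},$$
living in the polynomial ring $R_0[b_{ij}^k, c_{12}]$ where $R_0$ contains the entries of the generic $4\times 3$ matrix $X$. First I would specialize the entries of $X$ to those of the Hilbert-Burch matrix $M=(m_{ij})$: by the Hilbert-Burch Theorem this sends $Y_i$ to $\pm f_i$, and the whole complex specializes to a free complex over $R[Z_1,Z_2]$ as soon as the defect variables are given values in $R[Z_1,Z_2]$.

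Second, I would exhibit an explicit specialization of the defect variables $b_{ij}^{k}$ ($k=1,2$) and $c_{12}$ in $R[Z_1,Z_2]$ such that the four specialized elements $w_i$ generate exactly $\mathcal{J}=(Z_1Z_2f_4,Z_1f_3,Z_2f_2,f_1)$. Guided by the weight pattern $(a_i,b_i)$ of the monomial $Z_1^{a_i}Z_2^{b_i}f_i$, the natural guess is to split the two ``layers'' of the $b$-matrix between the two indeterminates: set $b_{ij}^{1}$ using a sparse $Z_1$-pattern and $b_{ij}^{2}$ using a sparse $Z_2$-pattern so that each minor $B^{12}_{ij,kl}$ specializes to $0$, a unit, $Z_1$, $Z_2$, or $Z_1Z_2$, with $c_{12}$ chosen to produce the top-weight term $Z_1Z_2f_4$. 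After at most an invertible row operation on the rank-$4$ module $F_1$, the four entries of $v^{(1)}_{top}$ will be exactly the stated generators of $\mathcal{J}$, and the rest of the complex $\FF$ arises by specializing the explicit formulas for $v^{(2)}_{top}$ and $v^{(3)}_{top}$ given in Theorem \ref{thhbm}. Acyclicity of $\FF$ is inherited either from the acyclicity of $\FF^{top}_\bullet$ (which holds over the generic ring), or, if one prefers a direct argument, verified by the Buchsbaum-Eisenbud criterion applied to the specialized differentials, whose entries are polynomials in $Z_1,Z_2$ and in the entries and minors of $M$.

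Finally, for the equivalence of (1) and (2): by the Buchsbaum-Eisenbud criterion, $\FF$ is a minimal free resolution of $R[Z_1,Z_2]/\mathcal{J}$ precisely when no entry of any differential is a unit and the ideals of appropriate minors of the three differentials have depths at least $1$, $2$, and $3$ respectively. The crucial computation is the one for $\partial_3$, whose $2\times 2$ minors are explicit polynomials in $Z_1,Z_2$ with coefficients given by $2\times 2$ minors of $M$. Reducing modulo $Z_1$ and then modulo $Z_2$ (as was done in Theorem \ref{appln} after reducing modulo the single variable $Z$) collapses the depth-$3$ condition on these minors to the simultaneous non-vanishing of the two ideals $(M_{\hat{1}}^{2,4},M_{\hat{2}}^{2,4},M_{\hat{3}}^{2,4})$ and $(M_{\hat{1}}^{3,4},M_{\hat{2}}^{3,4},M_{\hat{3}}^{3,4})$ together with depth $\geq 2$ for at least one of them. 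The main obstacle is purely combinatorial: writing down the correct specialization and tracking the resulting minors of $\partial_3$ is considerably heavier than in Theorem \ref{appln} because of the two indeterminates $Z_1,Z_2$ and the additional defect variable $c_{12}$, but no new conceptual ingredient is required beyond the specialization machinery developed in the previous sections.
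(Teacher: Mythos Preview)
Your plan is the same as the paper's: specialize the complex $\FF^{top}_\bullet$ of Theorem~\ref{thhbm} for $m=2$, sending $X\mapsto M$ and choosing the defect variables so that $(w_1,\dots,w_4)$ becomes $\mathcal{J}$, then verify the Buchsbaum--Eisenbud criterion on the specialized differentials. Three details in your outline would not go through as written and should be corrected.

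First, the guess about $c_{12}$ is wrong: since $w_i=Y_ic_{12}+\cdots$, a nonzero $c_{12}$ contributes a $Y_ic_{12}$ term to \emph{every} $w_i$, which is incompatible with the monomial shape of the generators of $\mathcal{J}$. The paper sets $c_{12}=0$ and obtains $Z_1Z_2f_4$ from the $B$-minor $B^{12}_{12,13}$ via $b_{12}^1=Z_1$, $b_{34}^1=1$, $b_{13}^2=Z_2$, $b_{24}^2=1$, all other $b_{ij}^k=0$. Second, acyclicity is \emph{not} inherited from the generic $\FF^{top}_\bullet$: specialization preserves being a complex but can lower ranks and depths, so the Buchsbaum--Eisenbud check on the specialized $d_2$ is genuinely required (the paper verifies $\mathrm{rank}\,d_2=3$ via an explicit row relation and checks $\mathrm{depth}\,I_3(d_2)\ge 2$ by hand). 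Third, for the equivalence you cannot reduce modulo $Z_1$ and then modulo $Z_2$ in the style of Theorem~\ref{appln}, because neither $Z_1$ nor $Z_2$ alone lies in $I_2(d_3)$; what \emph{is} a $2\times 2$ minor of $d_3$ (from its last two rows) is the product $Z_1Z_2$, and the paper goes modulo that single element to reduce the depth-$3$ condition on $I_2(d_3)$ to condition~(2).
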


\begin{proof}
Consider the complex $\FF^{top}_\bullet$ obtained in Theorem \ref{thhbm} in the case $m=2$. Specializing the entries of the generic matrix $X$ to the entries of $M=(m_{ij})$, one gets, as in the proof of Theorem \ref{appln}, that the maximal minor $Y_j$ specializes to the generator $f_j$ of $I$.

Consider the following specialization of the defect variables: $b_{12}^1=Z_1$, $b_{34}^1=1$, $b_{13}^2=Z_2$,$b_{24}^2=1$ and $b^u_{ij}=0$ for any other choice of $1 \leq i < j \leq 4$ and $u=1,2$. Then the minors $B^{12}_{ij,kr}$ specialize as $B^{12}_{12,13}= Z_1Z_2$,$ B^{12}_{12,24}= Z_1$, $B^{12}_{34,13}= Z_2 $,$ B^{12}_{34,24}= 1$ and $B^{12}_{ij,kr}=0$ for any other combination of $i,j,k,r$. Hence, the ideal $J$ specializes to the ideal $\mathcal{J}$ and the complex $\FF^{top}_\bullet$ specializes to a complex $\FF$ having differentials: 
$$  d_2= \bmatrix 
m_{41} &  m_{42} & m_{43} & f_3 & -f_2   \\ 
 m_{31}Z_2 &  m_{32}Z_2 & m_{33}Z_2 & -Z_2f_4 & f_1  \\ 
m_{21}Z_1 &  m_{22}Z_1 & m_{23}Z_1 & f_1 & -Z_1f_4  \\ 
m_{11}Z_1Z_2 &  m_{12}Z_1Z_2 & m_{13}Z_1Z_2 & -Z_2f_2 & Z_1f_3 \\ 
 \endbmatrix,
 $$  $$ \footnotesize
 d_3= \bmatrix 
 Z_1M^{12}_1+ M^{34}_1 & -Z_2M^{13}_1- M^{24}_1
 \\  -Z_1M^{12}_2-M^{34}_2 & Z_2M^{13}_2+ M^{24}_2
 \\ Z_1M^{12}_3+ M^{34}_3 & -Z_2M^{13}_3- M^{24}_3
 \\ Z_1   & 
0  
 \\  0  &  -Z_2  
 \endbmatrix.  
   $$
   Calling $R_1,R_2,R_3,R_4$ the rows of $d_2$, it is easy to see that $(f_4Z_1Z_2)R_1+(f_3Z_1)R_2+(f_2Z_2)R_3+(f_1)R_4=0$ and therefore $d_2$ has rank $3$. Moreover, the maximal minors of $d_2$ obtained removing the last two columns are of the form $Z_1Z_2f_i$ while some minors involving the last two columns, if one among $m_{41},m_{42},m_{43}$ is nonzero, are clearly not contained in the ideals generated by $Z_1$ and $Z_2$.  It follows that the ideals of minors of $d_2$ has depth at least $2$.
 Finally, consider the maximal minors of $d_3$ and go modulo the generator $Z_1Z_2$. It is easy to show that ideal generated by the images of the remaining minors in the quotient ring has depth at least 2 if and only if condition 2 of the statement is verified.
\end{proof}

\section*{Acknowledgements}
The authors are supported by the grants MAESTRO NCN - UMO-2019/34/A/ST1/00263 - Research in Commutative Algebra and
Representation Theory and NAWA POWROTY - PPN/PPO/2018/1/00013/U/00001 - Applications of Lie algebras to Commutative Algebra.
The authors would like to thank Ela Celikbas, Lars Christensen, Sara Angela Filippini, Jai Laxmi, Jacinta Torres, and Oana Veliche for interesting conversations about the content of this paper.

\end{document}